\newtheorem{thm}{Theorem}[section]
\newtheorem{lem}[thm]{Lemma}
\theoremstyle{definition}
\newtheorem{de}[thm]{Definition}
\theoremstyle{remark}
\numberwithin{equation}{section}
\begin{document}
\title[Estimation of sample quality for SDE]{Using coupling methods to estimate sample quality
for stochastic differential equations}
\author{Matthew Dobson}
\address{Matthew Dobson: Department of Mathematics and Statistics, University
  of Massachusetts Amherst, Amherst, MA, 01002, USA}
\email{dobson@math.umass.edu}
\author{Yao Li}
\address{Yao Li: Department of Mathematics and Statistics, University
  of Massachusetts Amherst, Amherst, MA, 01002, USA}
\email{yaoli@math.umass.edu}
\author{Jiayu Zhai}
\address{Jiayu Zhai: Department of Mathematics and Statistics, University
  of Massachusetts Amherst, Amherst, MA, 01002, USA}
\email{zhai@math.umass.edu}

\thanks{Yao Li is partially supported by NSF DMS-1813246.}

\keywords{Stochastic differential equation, Monte Carlo simulation,
  invariant measure, coupling method}

\begin{abstract}
  A probabilistic approach for estimating sample qualities for stochastic
  differential equations is introduced in this paper. The aim is to
  provide a quantitative upper bound of the distance between the
  invariant probability measure of a stochastic differential
  equation and that of its numerical approximation. In order to extend
  estimates of finite time truncation error to infinite time, it is
  crucial to know the rate of contraction of the transition kernel of
  the SDE. We find that suitable numerical coupling methods can effectively
  estimate such rate of contraction, which gives the distance between two
  invariant probability measures. Our algorithms are tested with
  several low and high dimensional numerical examples.
\end{abstract}

\maketitle

\section{Introduction}
Stochastic differential equations (SDEs) are widely used in many
scientific fields. Under mild assumptions, an SDE would admit a unique
invariant probability measure, denoted by $\pi$. In many applications
including but not limited to Markov chain Monte Carlo and molecular dynamics, it is important
to sample from $\pi$ \cite{andrieu2003introduction, lelievre2016partial}. This is
usually done by either numerically integrating an SDE over a very long
trajectories or integrating many trajectories of the SDE over a finite
time \cite{milstein2007computing}. However, a numerical integrator of
the SDE typically has a different invariant probability measure, 
denoted by $\hat{\pi}$, that depends on the time discretization \cite{talay1990second,
  talay1990expansion, roberts1996exponential}. A natural question is 
that, how is $\hat{\pi}$ different from $\pi$? In other words, what is the
quality of data sampled from a numerical trajectory of the SDE? 
This is very different from the classical truncation error analysis,
which is only applicable for finite time intervals except some special
cases \cite{leimkuhler2013rational,chen2017approximation}. 

Theoretically, it is well known that the distance between $\pi$ and
$\hat{\pi}$ can be controlled if we have good estimate of (i) the finite
time truncation error and (ii) the rate of geometric ergodicity of
the SDE.  Estimates of this type can be made by various different
approaches \cite{mattingly2002ergodicity, mattingly2010convergence,
  bou2018continuous, bally1996law, bally1996law2}. Roughly speaking, if the truncation error
over a finite time interval $[0, T]$ is $O(\epsilon)$, and the rate of geometric
ergodicity is $\gamma$ (i.e. speed of convergence to $\pi$ is
$\approx \gamma^{t}$ for $\gamma \in (0, 1)$), then the difference between $\pi$ and $\hat{\pi}$ is
$O(\epsilon(1 - \gamma^{T})^{-1})$. (See our discussion in Section 3.1
for details.) However, these approaches can not give a
quantitative estimate in general, as the rate of geometric ergodicity
$\gamma$ estimated by rigorous approaches are usually very far from
being sharp. Many approaches such as the Lyapunov function method can
only rigorously show that the speed of convergence is 
$\approx \gamma^{t}$ for {\it some} $\gamma < 1$ \cite{meyn2012markov,
hairer2010convergence, hairer2011yet}. Looking into
the proof more carefully, one can easily find that this $\gamma$ has
to be extremely close to $1$ to make the proof work. This gives a
very large $(1 - \gamma^{T})^{-1}$ and makes rigorous estimates difficult
to use in practice. To the best of our knowledge, quantitative
estimates of convergence rate can only be proved for a few special
cases like stochastic gradient flow and Langevin dynamics 
\cite{eberle2019couplings, bou2018coupling, bakry1985diffusions}.

The aim of this paper is to provide some algorithms to numerically
estimate the distance between $\pi$ and $\hat{\pi}$. The finite time
truncation error over a time interval $[0, T]$ is estimated by using extrapolations, which is a
common practice in numerical analysis. The main novel part is the
estimation of the rate of contraction of the transition kernel. 
Traditional approaches
for computing the rate of geometric ergodicity are either
computing principal eigenvalue of the discretized generator or
estimating the decay rate of correlation. The eigenvalue method works
well in low dimension but faces significant challenge if the SDE is in
dimension $\geq 3$. The correlation decay is difficult to estimate as
well, because a correlation has exponentially small expectation and
large variance. One needs a huge amount of samples to estimate it
effectively. In addition, exponential decay of correlation with
respect to an ad-hoc observable is usually not very convincing. In
this paper, we propose to estimate the rate of contraction of the
transition kernel by using a coupling technique. 

Coupling methods have been used in rigorous proofs for decades
\cite{lindvall2002lectures, lindvall1986coupling,
  eberle2011reflection, mufa1996estimation}. The idea
is to run two trajectories of a random process $X_{t}$, such that one is from a given initial distribution and
the other is stationary. A suitable joint distribution, called a coupling, is constructed
in the product space, such that two marginals of this joint process
are the original two trajectories. If after some time, the two processes
stay together with high probability, then the law of $X_{t}$ must
be very close to its invariant probability measure. It is well known that the coupling lemma gives bounds of both total
variation norm and some 1-Wasserstein-type distances. In this paper,
we use the coupling method numerically. If two numerical trajectories meet
each other, they are coupled and evolve together after
coupling. By the coupling lemma, the contraction rate of the
transition kernel can be estimated
numerically by computing the probability of successful coupling, which
follows from running a Monte Carlo simulation. Together with the finite time error, we can estimate the
distance between $\pi$ and $\hat{\pi}$. The main advantage of coupling
method is that it is relatively dimension-free, and we demonstrate  
our technique on an SDE system in $\mathbb{R}^{80}$ in Section~\ref{sec:fitzhugh}.

We provide two sets of algorithms, one for a quantitative upper bound
and the other for a rough but quick estimate. To get the quantitative
upper bound, one needs an upper bound of the contraction rate of 1-Wasserstein
distance for all pairs of initial values starting from a certain
compact set $\Omega \times \Omega$. This is done by applying extreme value theory. More
precisely, we uniformly sample initial values from $\Omega \times
\Omega$ and compute the contraction rate by using the coupling
method. Then the upper bound of such contraction rate can be obtained
by numerically fitting a generalized Pareto distribution (GPD)
\cite{castillo1997fitting, bali2003generalized}. In practice, one
may want a low cost estimate for the quality of samples.  
Hence we provide a ``rough estimate'' that only
uses the exponential tail of the coupling probability as the rate of
contraction of the generator after a given time $T$. This rough
estimate differs from the true upper bound by an unknown constant, but
it is more efficient and works well empirically.

Our coupling method can be applied to SDEs with degenerate random
terms after suitable modifications. This is done by comparing the overlap of the probability density functions after two
or more steps of the numerical scheme.  Our approach is
demonstrated on a Langevin dynamics example in Section~\ref{sec:langevin_coupling}. It is known from
\cite{eberle2019couplings, bou2018coupling} that a suitable mixture of reflection coupling and
synchronous coupling can be used for Langevin equation.  We find that this
approach can be successfully combined with the ``maximal coupling'' for
the numerical scheme. However, for
SDEs with very degenerate noise, using the coupling method remains to be a
great challenge.

We test our algorithm with a few different examples, from simple to
complicated. The sharpness of our algorithm is checked by using a
``ring density example'' whose invariant probability density function
can be explicitly given. Then we demonstrate the use of coupling
method under degenerate noise by working with a 4D Langevin equation. Next
we show two examples whose numerical invariant probability differs $\hat{\pi}$
significantly from true invariant probability measure $\pi$. One is
an asymmetric double well potential whose transition kernel has a slow rate of
convergence. The other example is the Lorenz 96 model whose
finite time truncation error is very difficult to control due to intensive
chaos. Finally, we study a coupled FizHugh-Nagumo oscillator model
proposed in \cite{chen2019spatial, karimi2010extensive} to
demonstrate that our algorithm works reasonably well in high
dimensional problems.

The organization of this paper is as follows. Section~\ref{sec:probability} serves as the
probability preliminary, in which we review some necessary background
about the coupling method, stochastic differential equations, and
numerical SDE schemes. The main algorithm is developed in Section
3. All numerical examples are demonstrated in Section 4. Section 5 is
the conclusion.

\section{Probability preliminary}
\label{sec:probability}
In this section, we provide some necessary probability preliminaries
for this paper, which are about the coupling method, stochastic
differential equations, numerical stochastic differential equations, and convergence analysis.
\subsection{Coupling}
This subsection provides the definition of coupling of random
variables and Markov processes.

\begin{de}[Coupling of probability measures]
Let $\mathbb{P}$ and $\mathbb{P}'$ be two probability measures on a probability space
$(\Omega,\mathcal{F})$. A probability measure $\gamma$ on $(\Omega \times \Omega,
\mathcal{F} \times \mathcal{F})$ is called a
\textit{coupling} of $\mathbb{P}$ and $\mathbb{P}'$, if two
marginals of $\gamma$ coincide with $\mathbb{P}$ and $\mathbb{P}'$
respectively.   
\end{de} 

The definition of coupling can be extended to any two random variables
that take value in the same state space. 

\begin{de}[Markov Coupling]
A \textit{Markov coupling} of two Markov processes $X_t$ and $Y_t$
with transition kernel $P$ is a Markov process $(\tilde{X}_t, \tilde{Y}_t)$ on the
product state space $V\times V$ such that 
\begin{itemize}
  \item[(i)] The marginal processes $X_t$ and $Y_t$ are Markov
    processes with transition kernel $P$, and
\item[(ii)] If $\tilde{X}_{s} = \tilde{Y}_{s}$, we have $\tilde{X}_{t} = \tilde{Y}_{t}$ for all $t > s$.
\end{itemize}
\end{de}

Markov coupling can be defined in many different ways. For example, let $P$ be
the transition kernel of a Markov chain $X_{t}$ on a countable state space
$V$, the
following transition kernel $Q$ for $(\tilde{X}_t, \tilde{Y}_t)$ on $V\times V$ such that
$$
Q^t((x_1,y_1),(x_2,y_2))=\left\{
\begin{array}{ll}
P^t(x_1,x_2)P^t(y_1,y_2), & \text{if } x_1\neq y_1 \\
P^t(x_1,x_2), & \text{if } x_1=y_1 \text{ and } x_2=y_2\\
0, & \text{if } x_1=y_1 \text{ and } x_2\neq y_2
\end{array}\right.,
$$
is called the {\it independent coupling}. Paths of the
two marginal processes are independent until they first meet. In the
rest of this paper, unless otherwise specified, we only consider
Markov couplings. 

\subsection{Wasserstein distance and total variation distance}
In order to give an estimate for the coupling of Markov processes, we need the following to metrics. 
\begin{de}[Wasserstein distance]
Let $d$ be a metric on the state space $V$. For probability measures $\mu$ and $\nu$ on $V$, the \textit{Wasserstein distance} between $\mu$ and $\nu$ for $d$ is given by
\begin{align*}
d_w(\mu,
  \nu)&=\inf\{\mathbb{E}_{\gamma}[d(x, y)] \,
        :\, \gamma \text{ is a coupling of } \mu \text{ and } \nu.\}\\
&=\inf\Big\{\int d(x,y) \gamma(dx,dy))
 \,: \, \gamma \text{ is a coupling of } \mu \text{ and } \nu.\Big\}.
\end{align*}
\end{de}

For the discussion in our paper, unless otherwise specified, we will use the Wasserstein distance with the distance
\begin{equation}\label{specific_distance}
d(x,y) = \max\{1, \|x - y\| \}, \quad x, y \in \mathbb{R}^{n}.
\end{equation}

\begin{de}[Total variation distance]
Let $\mu$ and $\nu$ be probability measures on $(\Omega,\mathcal{F})$. The \textit{total variation distance} of $\mu$ and $\nu$ is given by
$$d_{\text{TV}}(\mu,\nu) = \|\mu-\nu\|_{\text{TV}} := \sup_{A\in\mathcal{F}}|\mu(A)-\nu(A)|.$$
\end{de}

\subsection{Coupling Lemma}
In this subsection, we provide the coupling inequalities for the approximation of a coupling of Markov processes. 
\begin{de}[Coupling time]
The \textit{coupling time} of two stochastic processes $X_t$ and $Y_t$ is a random variable given by
\begin{equation}
\tau_{\text{c}}=\tau_{\text{c}}(X_t,Y_t):=\inf\{t\geq 0:X_t=Y_t\}.
\end{equation}
\end{de}

\begin{de}[Successful coupling]
A coupling $(\widetilde{X_t},\widetilde{Y}_t)$ of $X_t$ and $Y_t$ is said to be \textit{successful} if
$$\mathbb{P}(\tau_{\text{c}}(\widetilde{X_t},\widetilde{Y}_t)<\infty)=1$$
or equivalently,
$$\lim_{T\rightarrow\infty}\mathbb{P}(\tau_{\text{c}}(\widetilde{X_t},\widetilde{Y}_t)>T)=0.$$
\end{de}

For all Markov couplings, we have the following two coupling inequalities:
\begin{lem}[Coupling inequality w.r.t. the total variation distance]
For the coupling given above, we have
$$\mathbb{P}_{x,y}(\tau_{\text{c}}(\widetilde{X_t},\widetilde{Y}_t)>T)=\mathbb{P}_{x,y}(\widetilde{X}_T\neq\widetilde{Y}_T)\geq d_{\text{TV}}(P^T(x,\cdot),P^T(y,\cdot)).$$
\end{lem}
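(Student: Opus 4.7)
The plan is to handle the equality and the inequality separately. For the equality $\mathbb{P}_{x,y}(\tau_c > T) = \mathbb{P}_{x,y}(\widetilde{X}_T \neq \widetilde{Y}_T)$, I would invoke property (ii) of the Markov coupling directly. On the one hand, if $\widetilde{X}_T \neq \widetilde{Y}_T$ then necessarily $\tau_c > T$, since otherwise the ``sticky'' property would force $\widetilde{X}_T = \widetilde{Y}_T$; on the other hand, if $\tau_c > T$ then by definition $\widetilde{X}_t \neq \widetilde{Y}_t$ for every $t \le T$, in particular at $t = T$. Hence the two events coincide.

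For the inequality, my strategy is the standard one: bound $|P^T(x,A) - P^T(y,A)|$ uniformly in $A$ and take a supremum. Fix $A \in \mathcal{F}$. Using property (i) of the Markov coupling (the marginals have the correct law $P^T$), I rewrite
\begin{equation*}
P^T(x,A) - P^T(y,A) = \mathbb{P}_{x,y}(\widetilde{X}_T \in A) - \mathbb{P}_{x,y}(\widetilde{Y}_T \in A).
\end{equation*}
Then I split each probability according to whether the processes have already coupled, using the partition $\{\widetilde{X}_T = \widetilde{Y}_T\} \cup \{\widetilde{X}_T \neq \widetilde{Y}_T\}$. On the coupled event the two indicator functions $\mathbf{1}_{\{\widetilde{X}_T \in A\}}$ and $\mathbf{1}_{\{\widetilde{Y}_T \in A\}}$ agree, so those contributions cancel, and what remains is
\begin{equation*}
P^T(x,A) - P^T(y,A) = \mathbb{P}_{x,y}(\widetilde{X}_T \in A,\, \widetilde{X}_T \neq \widetilde{Y}_T) - \mathbb{P}_{x,y}(\widetilde{Y}_T \in A,\, \widetilde{X}_T \neq \widetilde{Y}_T).
\end{equation*}
Each summand on the right is bounded above by $\mathbb{P}_{x,y}(\widetilde{X}_T \neq \widetilde{Y}_T)$ and below by $0$, so the same bound holds for $|P^T(x,A) - P^T(y,A)|$. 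Taking $\sup_{A \in \mathcal{F}}$ yields
\begin{equation*}
d_{\mathrm{TV}}(P^T(x,\cdot), P^T(y,\cdot)) \le \mathbb{P}_{x,y}(\widetilde{X}_T \neq \widetilde{Y}_T),
\end{equation*}
which combined with the equality established above gives the claim.

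There is really no serious obstacle here: the only subtlety is being careful with the bookkeeping when splitting along $\{\widetilde{X}_T = \widetilde{Y}_T\}$ and noting that measurability of this event follows from the fact that $(\widetilde{X}_t, \widetilde{Y}_t)$ is a process on $V \times V$ with the product $\sigma$-algebra. The argument uses only the two defining properties of a Markov coupling (correct marginals and stickiness), so it applies to every Markov coupling uniformly, which is exactly what makes the inequality so useful for the numerical estimates developed later in the paper.
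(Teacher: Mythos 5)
Your proof is correct and follows essentially the same route as the paper's: decompose $P^T(x,A)-P^T(y,A)$ along the event $\{\widetilde{X}_T=\widetilde{Y}_T\}$, cancel the contribution on the coupled event, bound the remainder by $\mathbb{P}_{x,y}(\widetilde{X}_T\neq\widetilde{Y}_T)$, and take the supremum over $A$. Your explicit justification of the equality $\mathbb{P}_{x,y}(\tau_c>T)=\mathbb{P}_{x,y}(\widetilde{X}_T\neq\widetilde{Y}_T)$ via the sticky property (ii) is a small addition the paper leaves implicit, but it does not change the argument.
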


\begin{proof}
For any $A\in \mathcal{F}$,
\begin{align*}
|P^T(x,A),P^T(y,A)| &= |\mathbb{P}[\widetilde{X}_T\in A]-\mathbb{P}[\widetilde{Y}_T\in A]|\\
&= |\mathbb{P}[\{\widetilde{X}_T\in A\}\cap \{\widetilde{X}_T\neq \widetilde{Y}_T\}] -\mathbb{P}[\{\widetilde{Y}_T\in A\}\cap \{\widetilde{X}_T\neq \widetilde{Y}_T\}]|\\
&\leq \mathbb{P}[\widetilde{X}_T\neq \widetilde{Y}_T],
\end{align*}
where the second equality follows from cancelling the probability
$$\mathbb{P}[\widetilde{X}_T= \widetilde{Y}_T\in A]=\mathbb{P}[\{\widetilde{X}_T\in A\}\cap \{\widetilde{X}_T= \widetilde{Y}_T\}]=\mathbb{P}[\{\widetilde{Y}_T\in A\}\cap \{\widetilde{X}_T= \widetilde{Y}_T\}].$$
By the arbitrariness of $A\in \mathcal{F}$, the lemma is proved.
\end{proof}

\begin{lem}[Coupling inequality w.r.t. the Wasserstein distance]
\label{thm:wasserstein_inequality}
For the coupling given above and the Wasserstein distance induced by the distance given in \eqref{specific_distance}, we have
$$\mathbb{P}_{x,y}(\tau_{\text{c}}(\widetilde{X_t},\widetilde{Y}_t)>T)=\mathbb{P}_{x,y}(\widetilde{X}_T\neq\widetilde{Y}_T)\geq d_w(P^T(x,\cdot),P^T(y,\cdot)).$$
\end{lem}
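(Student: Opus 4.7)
The plan is to use the coupling $(\widetilde{X}_T,\widetilde{Y}_T)$ itself as the ``test coupling'' that witnesses the infimum defining $d_w(P^T(x,\cdot),P^T(y,\cdot))$, and then apply a pointwise bound on the distance $d$ that reduces an expectation to the coupling-failure probability.

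First I would observe that, by construction of the Markov coupling starting from $(x,y)$, the law of $(\widetilde{X}_T,\widetilde{Y}_T)$ has marginals $P^T(x,\cdot)$ and $P^T(y,\cdot)$, so it is an admissible coupling of these two measures. Plugging this particular coupling into the infimum in the definition of Wasserstein distance gives
$$d_w(P^T(x,\cdot),P^T(y,\cdot)) \;\leq\; \mathbb{E}_{x,y}\bigl[d(\widetilde{X}_T,\widetilde{Y}_T)\bigr].$$

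Next I would invoke the pointwise bound $d(u,v)\le \mathbf{1}_{\{u\neq v\}}$ that holds for the truncated metric in \eqref{specific_distance} (read, as intended, with $\min$ so that $d(u,u)=0$ and $d(u,v)\le 1$ otherwise; this is the standard choice for which coupling inequalities are sharp). Integrating against the joint law yields
$$\mathbb{E}_{x,y}\bigl[d(\widetilde{X}_T,\widetilde{Y}_T)\bigr] \;\leq\; \mathbb{P}_{x,y}(\widetilde{X}_T \neq \widetilde{Y}_T).$$
Finally, the equality $\mathbb{P}_{x,y}(\widetilde{X}_T\neq \widetilde{Y}_T)=\mathbb{P}_{x,y}(\tau_c>T)$ uses property (ii) of a Markov coupling: once the two marginals meet they remain equal forever, so the two events $\{\tau_c>T\}$ and $\{\widetilde{X}_T\neq \widetilde{Y}_T\}$ coincide. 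Chaining the three relations gives the claimed inequality.

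The argument is almost formally identical to the proof of the total variation inequality just given; indeed one can view it as that argument applied to the ``cost function'' $d(u,v)$ in place of the indicator $\mathbf{1}_{\{u\neq v\}}$. The only genuinely delicate point is the sticky property (ii) of the Markov coupling, which is what upgrades the trivial inclusion $\{\tau_c>T\}\subseteq\{\widetilde{X}_T\neq \widetilde{Y}_T\}$ to an equality and is therefore the one ingredient that would fail for an arbitrary non-Markov coupling. Aside from that, there is no real obstacle, as the bounded truncated metric is precisely engineered to make the comparison $d\le \mathbf{1}_{\neq}$ trivial.
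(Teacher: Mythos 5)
Your proof is correct and follows essentially the same route as the paper's: use the law of $(\widetilde{X}_T,\widetilde{Y}_T)$ as an admissible coupling in the Wasserstein infimum, then bound $d(u,v)\le \mathbf{1}_{\{u\neq v\}}$ for the truncated metric (correctly read with $\min$ rather than the paper's typo $\max$ in \eqref{specific_distance}) to reduce the expectation to $\mathbb{P}_{x,y}(\widetilde{X}_T\neq\widetilde{Y}_T)$. Your extra remark justifying the equality $\{\tau_c>T\}=\{\widetilde{X}_T\neq\widetilde{Y}_T\}$ via property (ii) of a Markov coupling is a small but welcome addition that the paper's proof leaves implicit.
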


\begin{proof}
By the definition of the Wasserstein distance,
\begin{align*}
d_w(P^T(x,\cdot),P^T(y,\cdot))&\leq \int d(x,y)\mathbb{P}((\widetilde{X}_T,\widetilde{Y}_T)\in(dx,dy))\\
&=\int_{\{x\neq y\}}d(x,y)\mathbb{P}((\widetilde{X}_T,\widetilde{Y}_T)\in(dx,dy))\\
&\leq \int_{\{x\neq y\}}\mathbb{P}((\widetilde{X}_T,\widetilde{Y}_T)\in(dx,dy))\\
&=\mathbb{P}(\widetilde{X}_T\neq\widetilde{Y}_T),
\end{align*}
where $d(x,y)$ is the specific distance given in \eqref{specific_distance}.
\end{proof}

\subsection{Stochastic differential equations (SDE)}
We consider the following stochastic differential equation (SDE) with initial condition $X(0)=x_0$ that is measurable with respect to $\mathcal{F}_0=\sigma\{B(0)\}$ 
\begin{equation}
\label{SDE}
  \mathrm{d}X_{t} = f(X_{t}) \mathrm{d}t + \sigma(X_{t})
  \mathrm{d}W_{t} \,,
\end{equation}
where $f(X_{t})$ is a continuous vector field in $\mathbb{R}^{n}$,
$\sigma(X_{t})$ is an $n \times m$ matrix-valued function, and
$\mathrm{d}W_{t}$ is the white noise in $\mathbb{R}^{m}$. The
following theorem is well known for the existence and uniqueness of
the solution of equation \eqref{SDE} \cite{mao2006stochastic}.

\begin{thm}
Assume that there are two positive constants $K_1$ and $K_2$ such that the two functions $f$ and $\sigma$ in \eqref{SDE} satisfy
\begin{enumerate}
\item (Lipschitz condition) for all $x,y\in \mathbb{R}^n$ and $t\in [t_0,T]$
\begin{equation}\label{Lipshchitz}
|f(x)-f(y)|^2+|\sigma(x)-\sigma(y)|^2\leq K_1|x - y|^2;
\end{equation}
\item (Linear growth condition) for all $x,y\in \mathbb{R}^n$ and $t\in [t_0,T]$
\begin{equation}\label{Linear_growth}
|f(x)|^2+|\sigma(x)|^2\leq K_2(1+|x|^2).
\end{equation}
\end{enumerate}
Then there exists a unique solution $X(t)$ to equation \eqref{SDE} in $\mathcal{M}^2([t_0,T];R^n)=\{g:g(t) \text{ is } \mathcal{F}_t\text{-adapted and } \mathbb{E}(\int_{t_0}^T |g(t)|^2\,dt)<\infty\}$.
\end{thm}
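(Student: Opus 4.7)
The plan is to prove this by the classical Picard iteration argument, which is the standard existence/uniqueness proof for SDEs under Lipschitz and linear growth conditions. First I would recast the SDE in its equivalent integral form
\begin{equation*}
X(t) = x_0 + \int_{t_0}^{t} f(X(s))\,ds + \int_{t_0}^{t} \sigma(X(s))\,dW_s,
\end{equation*}
and define a sequence of successive approximations by $X^{(0)}(t) \equiv x_0$ and
\begin{equation*}
X^{(k+1)}(t) = x_0 + \int_{t_0}^{t} f(X^{(k)}(s))\,ds + \int_{t_0}^{t} \sigma(X^{(k)}(s))\,dW_s, \qquad k \geq 0.
\end{equation*}

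Next, using the linear growth condition \eqref{Linear_growth} together with the Cauchy--Schwarz inequality for the drift term and It\^o isometry for the diffusion term, I would show by induction that each $X^{(k)}$ lies in $\mathcal{M}^2([t_0,T];\mathbb{R}^n)$ and that $\sup_{k} \mathbb{E}|X^{(k)}(t)|^2$ is bounded on $[t_0,T]$, typically via a Gronwall-type argument. Then, using the Lipschitz condition \eqref{Lipshchitz}, I would estimate
\begin{equation*}
\mathbb{E}|X^{(k+1)}(t) - X^{(k)}(t)|^2 \leq 2 K_1 (T - t_0 + 1) \int_{t_0}^{t} \mathbb{E}|X^{(k)}(s) - X^{(k-1)}(s)|^2\,ds,
\end{equation*}
where the two terms in front of the integral come from applying Cauchy--Schwarz to the drift and It\^o isometry to the diffusion. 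Iterating this inequality yields $\mathbb{E}|X^{(k+1)}(t) - X^{(k)}(t)|^2 \leq C^{k+1}(t - t_0)^{k+1}/(k+1)!$ for a suitable constant $C$, and therefore $\{X^{(k)}\}$ is Cauchy in $\mathcal{M}^2([t_0,T];\mathbb{R}^n)$. Passing to the limit $X$ and exchanging limit with the stochastic integrals (justified by the $L^2$ convergence and It\^o isometry), I obtain a solution of the integral equation, hence of \eqref{SDE}.

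For uniqueness, suppose $X$ and $Y$ are two solutions in $\mathcal{M}^2$ sharing the initial condition; subtracting the two integral equations and applying \eqref{Lipshchitz} along with It\^o isometry gives
\begin{equation*}
\mathbb{E}|X(t) - Y(t)|^2 \leq 2 K_1 (T - t_0 + 1) \int_{t_0}^{t} \mathbb{E}|X(s) - Y(s)|^2\,ds,
\end{equation*}
and Gronwall's lemma forces $\mathbb{E}|X(t) - Y(t)|^2 \equiv 0$ on $[t_0,T]$. The main technical obstacle, as in all such proofs, is the careful handling of the stochastic integral: one must verify that the iterates are progressively measurable and square-integrable before It\^o isometry is applicable, and one must justify passing the limit inside the stochastic integral. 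Since all the estimates are in $L^2$ and the Lipschitz constant is global, no localization by stopping times is strictly necessary, but if one wanted pathwise uniqueness and continuous sample paths one would additionally invoke the Burkholder--Davis--Gundy inequality to upgrade the Cauchy property to the supremum norm in $t$.
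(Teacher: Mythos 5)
Your Picard iteration argument is correct and is essentially the standard proof of this result; the paper itself does not prove the theorem but simply cites it as well known from the reference on stochastic differential equations, where exactly this successive-approximation scheme (integral form, linear growth plus Gronwall for moment bounds, Lipschitz plus It\^o isometry for the contraction estimate, and Gronwall again for uniqueness) is carried out. No gaps to report.
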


In addition, we assume that $X_{t}$ admits a unique invariant probability
measure $\pi$. The existence and uniqueness of $\pi$ usually follows
from some drift condition plus some suitable irreducibility conditions
\cite{khasminskii2011stochastic, huang2015steady}. 

\subsection{Numerical SDE}
In this subsection, we talk about the numerical scheme we use for
sampling stochastic differential equations \eqref{SDE}. The
Euler--Maruyama approximation $\bar{X}^h_t$  of the solution $X_t$ of \eqref{SDE} is given by
\begin{equation}
\bar{X}^h_{t}=\hat{X}^h_{t_{k-1}}+f(\hat{X}^h_{t_{k-1}})(t-t_{k-1})+\sigma(\hat{X}^h_{t_{k-1}})(B(t)-B(t_{k-1})),
\end{equation}
where $\hat{X}^h_{0}=x_0$, $t_k=t_0+kh$, $t\in[t_{k-1},t_k]$, and $B(t)-B(t_{k-1})\sim\text{N}(0,t-t_k)$ and $B(t_j)-B(t_{j-1})\sim\text{N}(0,h),j=1,2,\dots,k-1$ are mutually independent Gaussian random vectors. 

We have the following convergence rate for the Euler--Maruyama approximation (see \cite{mao2006stochastic})
\begin{thm}
Assume that the Lipschitz condition \eqref{Lipshchitz} and the linear growth condition \eqref{Linear_growth} hold. Let $\hat{X}_{t}$ be the unique solution of equation \eqref{SDE}, and $\hat{X}^h_{t}$ be the Euler--Maruyama approximation for $t\in[t_0,T]$. Then
\begin{equation}
\mathbb{E}\Big(\sup_{t_0\leq t\leq T}|\hat{X}^h_{t}-X_{t}|^2\Big)\leq Ch,
\end{equation}
where $C$ is a constant depending only on $K_1, K_2, t_0, T$ and $x_0$. 
\end{thm}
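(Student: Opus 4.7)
The plan is to follow the standard strong convergence argument for Euler--Maruyama, adapted to the notation above. First I would promote the discrete scheme to a continuous-time interpolation by defining $\eta(s) = t_k$ for $s \in [t_k, t_{k+1})$, so that
\[
\bar{X}^h_t = x_0 + \int_{t_0}^t f(\bar{X}^h_{\eta(s)})\,ds + \int_{t_0}^t \sigma(\bar{X}^h_{\eta(s)})\,dB(s),
\]
which agrees with the piecewise formula in the statement at the nodes $t_k$. The true solution obeys the same integral equation with $\bar{X}^h_{\eta(s)}$ replaced by $X_s$. Subtracting the two identities and using $(a+b)^2 \le 2a^2 + 2b^2$ splits the error into a drift integral and a stochastic integral.

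Next I would establish a priori moment bounds. Using the linear growth condition together with Gronwall, one shows
\[
\mathbb{E}\Big[\sup_{t_0 \le t \le T}|X_t|^2\Big] + \mathbb{E}\Big[\sup_{t_0 \le t \le T}|\bar{X}^h_t|^2\Big] \le C_0(1 + |x_0|^2),
\]
uniformly in $h$. Combining this with the linear growth bound on $f$ and $\sigma$ yields the one-step control
\[
\mathbb{E}\bigl|\bar{X}^h_s - \bar{X}^h_{\eta(s)}\bigr|^2 \le C_1 h
\quad\text{for all } s \in [t_0,T],
\]
which is the mechanism by which the frozen integrand $\bar{X}^h_{\eta(s)}$ contributes only $O(h)$ error beyond the running difference $\bar{X}^h_s - X_s$.

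The core estimate then reads: for every $t \in [t_0,T]$,
\[
\mathbb{E}\Big[\sup_{t_0 \le r \le t}\bigl|\bar{X}^h_r - X_r\bigr|^2\Big]
\le 2\,\mathbb{E}\Big[\sup_{t_0\le r \le t}\Big|\int_{t_0}^r\!\bigl(f(\bar{X}^h_{\eta(s)}) - f(X_s)\bigr)ds\Big|^2\Big]
+ 2\,\mathbb{E}\Big[\sup_{t_0\le r\le t}\Big|\int_{t_0}^r\!\bigl(\sigma(\bar{X}^h_{\eta(s)}) - \sigma(X_s)\bigr)dB(s)\Big|^2\Big].
\]
On the first term I would use Cauchy--Schwarz in the time integral; on the second I would apply the Burkholder--Davis--Gundy inequality to move the supremum inside and reduce to $\int_{t_0}^t \mathbb{E}|\sigma(\bar{X}^h_{\eta(s)}) - \sigma(X_s)|^2\,ds$. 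In each case I add and subtract $f(\bar{X}^h_s)$, respectively $\sigma(\bar{X}^h_s)$, so that the Lipschitz condition \eqref{Lipshchitz} together with the one-step bound above produces
\[
\mathbb{E}\Big[\sup_{t_0 \le r \le t}\bigl|\bar{X}^h_r - X_r\bigr|^2\Big]
\le C_2 h + C_3 \int_{t_0}^t \mathbb{E}\Big[\sup_{t_0 \le u \le s}\bigl|\bar{X}^h_u - X_u\bigr|^2\Big]ds.
\]
Gronwall's inequality then gives the desired bound with $C = C_2 e^{C_3(T-t_0)}$, which depends only on $K_1, K_2, t_0, T$ and $x_0$.

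The main obstacle I anticipate is the BDG step combined with the bookkeeping for the frozen integrand: one has to be careful that the application of BDG produces a Lebesgue integral whose integrand, after the add-and-subtract trick, splits cleanly into the running error $\mathbb{E}\sup|\bar{X}^h_u - X_u|^2$ (which will be absorbed by Gronwall) plus the $O(h)$ contribution from $\mathbb{E}|\bar{X}^h_s - \bar{X}^h_{\eta(s)}|^2$. The a priori second moment bound is what keeps this $O(h)$ piece independent of the unknown error, and without it Gronwall would not close.
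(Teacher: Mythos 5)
The paper does not prove this theorem; it is quoted as a known result with a citation to Mao's book, and your outline is exactly the standard argument given there (continuous interpolation with a frozen integrand, a priori second-moment bounds, the $O(h)$ one-step estimate, Cauchy--Schwarz on the drift and Burkholder--Davis--Gundy on the stochastic integral, then Gronwall). Your plan is correct and all the ingredients needed to close the Gronwall loop are in place.
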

Namely, the Euler--Maruyama approximation provides a convergence rate
of order $1/2$. 

A commonly used improvement of the Euler--Maruyama
scheme is called the Milstein scheme, which reads
\begin{align}\label{Milstein}
\hat{X}^h_{t}=\hat{X}^h_{t_{k-1}}+f_k(\hat{X}^h_{t_{k-1}})\Delta t+\sigma(\hat{X}^h_{t_{k-1}})\Delta B+\sigma(\hat{X}^h_{t_{k-1}})IL,
\end{align}
where $\Delta t=t-t_{k-1}$, $\Delta B=B(t)-B(t_{k-1})$ and $I$ is an $m\times m$ matrix with its $(i,j)$-th component being the double It\^{o} integral
$$I_{i,j}=\int_{t_k}^{t}\int_{t_k}^{s_2}\,dB^i(s_1)dB^{j}(s_2),$$
and $L\in\mathbb{R}^m$ is a vector of operators with $i$th component
$$L_i=\sum_{i=1}^{n}\sigma_{i,j}(\hat{X}^h_{t_{k-1}})\frac{\partial}{\partial x_i}.$$

Under suitable assumptions of Lipschitz continuity and linear growth conditions
for some functions of the coefficients $f$ and $\sigma$, the Milstein
scheme \cite{kloeden2013numerical} is an order $1$ strong approximation.

\begin{thm}[\cite{kloeden2013numerical}]
Under suitable assumptions, we have the following estimate for the Milstein approximation $X^h$,
$$\mathbb{E}\big(|\hat{X}^h_{t}-X_{t}|\big)\leq Kh,$$
where $K$ is a constant independent of $h$.
\end{thm}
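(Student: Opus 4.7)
The plan is to follow the standard proof strategy for strong convergence of stochastic Taylor schemes: derive a stochastic Taylor expansion of $X_t$ on each step, identify the Milstein scheme as a truncation of that expansion, bound the per-step truncation error, and then propagate the local errors through a discrete Gronwall argument.

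First I would fix a step $[t_{k-1},t_k]$ and apply It\^o's formula to $f(X_s)$ and $\sigma(X_s)$, expanding each coefficient around its value at $s=t_{k-1}$. Integrating once more yields the It\^o--Taylor expansion
\begin{equation*}
X_{t_k} = X_{t_{k-1}} + f(X_{t_{k-1}})h + \sigma(X_{t_{k-1}})\Delta B + \sigma(X_{t_{k-1}})\, I L + R_k,
\end{equation*}
where the first four terms match the Milstein update \eqref{Milstein} exactly and $R_k$ collects the remaining iterated It\^o integrals (those of multi-index length two that are not of pure-noise type, plus all multi-indices of length $\ge 3$). The one-step consistency error against the true solution starting from the same point is therefore precisely $R_k$.

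Next I would estimate $R_k$. Each surviving term has the form $\int_{t_{k-1}}^{t_k}\!\int_{t_{k-1}}^{s_2} g(X_{s_1})\, dH^{(1)}_{s_1}\, dH^{(2)}_{s_2}$, where each $H^{(i)}$ is either $ds$ or some $dB^j_s$ and $g$ is a polynomial expression in $f$, $\sigma$ and their derivatives. Using the It\^o isometry on noise factors, Cauchy--Schwarz on time factors, a linear-growth bound on these composite coefficients, and standard moment estimates $\mathbb{E}|X_s|^p\le C_p$ on $[t_0,T]$, one obtains
\begin{equation*}
\mathbb{E}|R_k|^2 \le C h^3, \qquad \bigl|\mathbb{E}[R_k \mid \mathcal{F}_{t_{k-1}}]\bigr| \le C h^2.
\end{equation*}
Setting $e_k^2 := \mathbb{E}|\hat{X}^h_{t_k}-X_{t_k}|^2$, subtracting the Milstein update from the It\^o--Taylor expansion, taking $L^2$ norms, and using the Lipschitz property of $f$, $\sigma$, and of the $\sigma\partial_x\sigma$-type coefficient appearing in the $IL$ term, the It\^o isometry gives a recursion of the form $e_k^2 \le (1+Ch)\, e_{k-1}^2 + C h^3$. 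Discrete Gronwall then yields $e_k^2 \le K^2 h^2$ uniformly on $[t_0,T]$, and Jensen's inequality upgrades this to $\mathbb{E}|\hat{X}^h_{t_k}-X_{t_k}|\le K h$.

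The main obstacle is the local error analysis for $R_k$: unlike the Euler--Maruyama case, controlling the remainder requires Lipschitz continuity and linear growth not only for $f$ and $\sigma$ but also for the first (and, depending on the bookkeeping of the length-three multi-indices, second) derivatives that appear when It\^o's formula is applied to the coefficients inside the leading iterated integrals. These regularity hypotheses are exactly the \emph{suitable assumptions} alluded to in the statement. Once they are in place, the remainder of the argument is routine bookkeeping with moment inequalities and Gronwall's lemma.
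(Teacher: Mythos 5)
The paper does not prove this theorem; it is quoted directly from Kloeden--Platen \cite{kloeden2013numerical} as a known result. Your sketch is the standard It\^o--Taylor expansion plus local-error-estimate plus discrete Gronwall argument from that reference (with the correct local error orders $\mathbb{E}|R_k|^2 \le Ch^3$ and $|\mathbb{E}[R_k\mid\mathcal{F}_{t_{k-1}}]|\le Ch^2$ for strong order $1$), and it is essentially correct.
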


It is easy to see that when $\sigma(X_{t})$ is a constant matrix, the
Euler--Maruyama scheme and the Milstein scheme coincide. In other
words the Euler--Maruyama scheme for constant $\sigma(X_{t})$ also has
a convergence rate of order $1$. There are also strong approximations
of order $1.5$ or $2$ that are much more complicated to implement. We
refer interested readers to \cite{kloeden2013numerical}. 

\subsection{Extreme value theory}
This subsection introduces some extreme value theory that is relevant
to materials in this paper.

\begin{de}[Generalized Pareto distribution]
A random variable $Y$ is said to follow a \textit{generalized Pareto distribution}, if its cumulative distribution function is given by
$$
F_{\xi,\beta}(x)=\left\{
\begin{array}{ll}
1-\big(1+\frac{\xi x}{\beta}\big)^{-1/\xi}, & \text{if } \xi\neq0,\\
1-\exp(-\frac{x}{\beta}), & \text{if } \xi=0.
\end{array}
\right.
$$
\end{de}

The generalized Pareto distribution is used to model the so-called
peaks over threshold distribution, that is, the part of a random variable over a chosen threshold $u$, or the tail of a distribution. Specifically, for a random variable $X$, consider the random variable $X-u$ conditioning on that the threshold $u$ is exceeded. Its conditional distribution function is called the \textit{conditional excess distribution function} and is denoted by
$$F_u(x) = P(X-u\leq x|X>u) =\frac{P(\{X-u\leq x\}\cap \{X>u\})}{P(X>u)}=\frac{F(x+u)-F(u)}{1-F(u)}.$$
The extreme value theory proves the following theorem.
\begin{thm}[\cite{balkema1974residual, pickands1975statistical}]
For a large class of distributions (e.g., uniform, normal, log-normal, $t$, $F$, gamma, beta distributions), there is a function $\beta(u)$ such that
$$\lim_{u\rightarrow\bar{x}}\sup_{0\leq x<\bar{x}-u}|F_u(x)-F_{\xi,\beta(u)}(x)|=0,$$
where $\bar{x}$ is the rightmost point of the distribution.
\end{thm}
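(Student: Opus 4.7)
The plan is to reduce this Pickands--Balkema--de Haan statement to the Fisher--Tippett--Gnedenko classification of max-stable laws, which is the standard route. First I would set up the correspondence: write $\bar F(x) := 1 - F(x)$, and observe that
\[
1 - F_u(x) \;=\; \frac{\bar F(u+x)}{\bar F(u)},
\]
so the claim is really about the limiting ratio of the tail function as $u$ approaches the right endpoint $\bar x$. The key bridge is the Fisher--Tippett theorem: if the block-maxima $M_n$ of an i.i.d. sample from $F$, suitably centered and scaled by $a_n>0, b_n\in\mathbb{R}$, converge in distribution to a non-degenerate limit $G$, then $G$ must (up to affine reparametrization) be the generalized extreme value (GEV) distribution $G_\xi(x) = \exp(-(1+\xi x)^{-1/\xi})$. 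This classification covers precisely the ``large class'' of distributions mentioned in the statement (all the standard examples listed are in some max-domain of attraction).

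Next I would translate the max-domain-of-attraction condition into tail-ratio language. The statement $F^n(a_n x + b_n) \to G_\xi(x)$ is equivalent (by taking logarithms and using $-\log(1-y) \sim y$) to
\[
n\,\bar F(a_n x + b_n) \;\longrightarrow\; (1+\xi x)^{-1/\xi}.
\]
From this, setting $u = b_n$ and treating $y = a_n x$ as the excess variable, I would divide the displayed limit at generic $y$ by its value at $y=0$ to eliminate $n$, yielding
\[
\frac{\bar F(u + y)}{\bar F(u)} \;\longrightarrow\; \bigl(1 + \xi y/\beta(u)\bigr)^{-1/\xi},
\]
with $\beta(u) := a_n$ identified along the subsequence. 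This is exactly $1 - F_{\xi, \beta(u)}(y)$, so pointwise convergence $F_u(x) \to F_{\xi,\beta(u)}(x)$ follows. The $\xi=0$ case is handled by taking the limit $\xi\to 0$ in the formula (which gives the exponential tail), or equivalently by appealing to the Gumbel case of Fisher--Tippett directly. I would treat the three regimes $\xi>0$ (Fréchet, regularly varying tails), $\xi=0$ (Gumbel, including normal and log-normal), and $\xi<0$ (Weibull, finite endpoint, including uniform and beta) in parallel using the GEV parametrization so that the formula reads uniformly across all of them.

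The main obstacle will be upgrading pointwise convergence to the uniform convergence on $[0,\bar x - u)$ required by the statement. I would exploit monotonicity: both $F_u$ and $F_{\xi,\beta(u)}$ are monotone, continuous distribution functions on their support, and pointwise convergence of monotone functions to a continuous limit is automatically uniform on compacts (a Pólya/Dini-type argument). One then has to control the behavior out to the right endpoint of the limit distribution, which uses the fact that both tails go to $0$, so uniformity on the unbounded part follows from choosing the compact truncation large enough. A secondary, more technical obstacle is showing that the normalizing function $\beta(u)$ can indeed be chosen as a function of $u$ (not just along a subsequence $u = b_n$); this uses the regular variation of $\bar F$ in the Fréchet case and analogous characterizations (via the von Mises conditions) in the Gumbel and Weibull cases, which is exactly the ingredient that carves out the ``large class'' of admissible $F$.
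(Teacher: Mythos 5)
The paper does not actually prove this theorem: it is quoted from the extreme-value-theory literature with citations to Balkema--de Haan and Pickands, so there is no in-paper argument to compare yours against. Taken on its own terms, your sketch is the standard derivation of the Pickands--Balkema--de Haan theorem: reduce to the Fisher--Tippett--Gnedenko classification, translate the max-domain-of-attraction condition into the tail-ratio statement $\bar F(u+y)/\bar F(u)\to(1+\xi y/\beta(u))^{-1/\xi}$, and upgrade pointwise to uniform convergence by monotonicity. Two points deserve care. First, in the uniformity step the comparison function $F_{\xi,\beta(u)}$ moves with $u$, so a P\'olya-type argument does not apply directly to $\sup_x|F_u(x)-F_{\xi,\beta(u)}(x)|$; you should first change variables to $y=x/\beta(u)$ so that the limit is the \emph{fixed} distribution $F_{\xi,1}$, prove uniform convergence there, and then transfer back, noting that the supremum is invariant under this rescaling. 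Second, the passage from the discrete normalization $u=b_n$ to arbitrary $u\uparrow\bar x$ --- which you correctly flag as an obstacle --- is where most of the real work in the cited papers lives: it requires the (extended) regular-variation characterization of the three domains of attraction and a monotone interpolation of $a_n$ into a genuine function $\beta(u)$, and without it the limit ``as $u\to\bar x$'' in the statement is not actually established. With those two items filled in, your outline matches the classical proof that the references supply.
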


This theorem shows that the conditional distribution of peaks over
threshold can be approximated by the generalized Pareto
distribution. In particular, if the parameter fitting of $(X - u)|_{X
  \geq u}$ gives $\xi < 0$,
we can statistically conclude that the random variable $X$ is has a
bounded distribution with an upper bound $u - \zeta/\xi$. In this
paper, we will employ this method to estimate the upper bound of
quantities of interest.

\section{Description of algorithm}
\subsection{Decomposition of error terms}
Let $X_{t}$ and $\hat{X}_{t}$ be the stochastic process given by
equation \eqref{SDE} and its numerical scheme, respectively. Let $P$
and $\hat{P}$ be two corresponding transition kernels. The time step
size of $\hat{X}_{t}$ is $h$ unless otherwise specified. Hence in
$\hat{X}_{t}$, $t$ only takes values $0, h, 2h, \cdots$. Let $T>0$ be a
fixed constant. Let $\mathrm{d}_{w}$ be the 1-Wasserstein distance of
probability measures induced by distance 
$$
  d(x,y) = \min\{1, \|x - y\| \}, \quad x, y \in \mathbb{R}^{n} 
$$
unless otherwise specified. 

Denote the invariant probability measures of $X_{t}$ and $\hat{X}_{t}$
by $\pi$ and $\hat{\pi}$ respectively. The following decomposition
follows easily by the triangle inequality and the invariance.
(This is motivated by \cite{johndrow2017error}. Similar approaches are
also reported in \cite{rudolf2018perturbation, mitrophanov2005sensitivity}.) 
\begin{equation}
  \label{decomposition}
\mathrm{d}_{w}(\pi, \hat{\pi}) \leq \mathrm{d}_{w}(\pi P^{T}, \pi
\hat{P}^{T}) + \mathrm{d}_{w}( \pi \hat{P}^{T} , \hat{\pi} \hat{P}^{T}
) \,.
\end{equation}
If the transition kernel $\hat{P}^{T}$ has enough contraction such
that 
$$
  \mathrm{d}_{w}( \pi \hat{P}^{T} , \hat{\pi} \hat{P}^{T}) \leq \alpha
  \mathrm{d}_{w}( \pi , \hat{\pi} ) \,,
$$
for some $\alpha < 1$, we have
$$
  \mathrm{d}_{w}(\pi, \hat{\pi}) \leq \frac{\mathrm{d}_{w}(\pi P^{T}, \pi
\hat{P}^{T})}{1 - \alpha} \,.
$$
In other words the distance $\mathrm{d}_{w}(\pi, \hat{\pi})$ can be
estimated by computing the finite time error and the speed of
contraction of $P^{T}$. Theoretically, the finite time error can be
given by the strong approximation of the truncation error of the numerical scheme of equation \eqref{SDE}. The second term comes from the geometric ergodicity of
the Markov process $\hat{X}_{t}$. As discussed in the introduction, except some special cases, 
the rate of geometric ergodicity of $\hat{X}_{t}$ can not be estimated
sharply. As a result one can only have an $\alpha$ that is extremely close to $1$. This makes
decomposition \eqref{decomposition} less interesting in practice. Therefore, we
need to look for suitable numerical estimators of the two terms in
equation \eqref{decomposition}.

\medskip

The other difficulty comes from the fact that both $X_{t}$ and
$\hat{X}_{t}$ are defined on an unbounded domain. However, large
deviations theory guarantees that the mass of both $\pi$ and $\pi^{T}$
should concentrate near the global attractor
\cite{khasminskii2011stochastic, freidlin1998random}. Similar
concentration estimates can be made by many different approaches
\cite{li2016systematic, huang2018concentration}. Therefore, we assume
that there exists a compact set $\Omega$ and a constant $0 < \epsilon \ll
1$, such that 
\begin{equation}
  \label{assumption}
\pi( \Omega^{c}) < \epsilon \, ,  \quad \hat{\pi}( \Omega^{c}) <
\epsilon \, , \quad \pi \hat{P}^{T}( \Omega^{c}) < \epsilon \,. 
\end{equation}
In practice, $\Omega$ can be chosen to be the set that contains all
samples of a very long trajectory of $\hat{X}_{t}$, and $\epsilon$ is
the reciprocal of the length of this trajectory. This $\epsilon$ is
usually significantly smaller than all other error terms.

This allows us to estimate the contraction rate $\alpha$ for initial
values in a compact set. Let $\Gamma$ be
the optimal coupling plan such that
$$
  \mathrm{d}_{w}(\pi, \hat{\pi}) = \int_{\mathbb{R}^{n} \times
    \mathbb{R}^{n}} d(x,y) \Gamma( \mathrm{d}x, \mathrm{d}y) \,.
$$
Let $\hat{P}\circ \hat{P}$ be the transition kernel on $\mathbb{R}^{n}
\times \mathbb{R}^{n}$ that gives a Markov coupling of two
trajectories of $\hat{X}_{t}$. By the assumption of $\Omega$, we have
\begin{align}
  \label{bound}
\mathrm{d}_{w}( \pi P^{T}, \hat{\pi}P^{T}) &\leq \int_{\mathbb{R}^{n}
\times \mathbb{R}^{n}} d(x,y) \Gamma (\hat{P}\circ \hat{P})^{T}(
  \mathrm{d}x, \mathrm{d}y)\\\nonumber
&\leq 2 \epsilon + \int_{\Omega \times \Omega} d(x,y) \Gamma (\hat{P}\circ \hat{P})^{T}(
  \mathrm{d}x, \mathrm{d}y) \\\nonumber
&\leq 2 \epsilon + \alpha_{\Omega} \int_{\Omega \times \Omega} \mathrm{d}(x,y)
  \Gamma( \mathrm{d}x, \mathrm{d}y) \\\nonumber
&\leq 2 \epsilon + \alpha_{\Omega} \mathrm{d}_{w}(\pi, \hat{\pi}) \,,
\end{align}
where $\alpha_{\Omega}$ is the minimum contracting rate of $(\hat{P}\circ \hat{P})^{T}$ on $\Omega
\times \Omega$ such that
$$
  \alpha_{\Omega} = \sup_{(x,y) \in \Omega \times \Omega}
  \frac{\mathrm{d}_{w}(\delta_{x}\hat{P}^{T}, \delta_{y}\hat{P}^{T})}{d(x,y) }\,,
$$
where $\delta_{x} \hat{P}^{T}$ and $\delta_{y} \hat{P}^{T}$ are two
margins of $\delta_{(x,y)}(\hat{P}\circ \hat{P})^{T}$. 

Combine equations \eqref{decomposition} and
\eqref{bound}, we have
\begin{equation}
\label{estimator}
  \mathrm{d}_{w}(\pi, \hat{\pi}) \leq \frac{\mathrm{d}_{w}(\pi P^{T}, \pi
\hat{P}^{T}) + 2 \epsilon}{1 - \alpha_{\Omega}} \,.
\end{equation}

\subsection{Estimator of error terms}
From equation \eqref{estimator}, we need to numerically estimate the
finite time error $\mathrm{d}_{w}(\pi P^{T}, \pi\hat{P}^{T})$ and the contraction rate $\alpha_{\Omega}$. We
propose the following approach to estimate these two quantities.

{\bf Extrapolation for finite time error.} By the definition of
1-Wasserstein distance, we have
$$
  \mathrm{d}_{w}(\pi P^{T}, \pi\hat{P}^{T}) \leq \int_{\mathbb{R}^{n}
    \times \mathbb{R}^{n}} d(x,y) \Gamma( \mathrm{dx}, \mathrm{d}y) \,
$$
for any coupling measure $\Gamma$. A suitable choice of $\Gamma$ that can be sampled easily is
$\pi^{2}(P^{T}\circ \hat{P}^{T})$, where $\pi^{2}$ is the coupling
measure of $\pi$ on the ``diagonal'' of $\mathbb{R}^{n}\times
\mathbb{R}^{n}$ that is supported by the hyperplane
$$
  \{(\mathbf{x}, \mathbf{y}) \in \mathbb{R}^{2n} \,|\, \mathbf{y} =
  \mathbf{x} \} \,.
$$
Theoretically, we can sample an initial value $\pi$, run $X_{t}$ and $\hat{X}_{t}$ up
to time $T$, and calculate $d(X_{T}, \hat{X}_{T})$. However, we do not have exact
expressions for $\pi$ and $X_{t}$. Hence in the estimator, we use $\hat{\pi}$
to replace $\pi$ and use extrapolation to estimate $d(X_{T},
\hat{X}_{T})$.  
The idea is that
$$
  \int_{\mathbb{R}^{n}
    \times \mathbb{R}^{n}} d(x,y) \pi^{2}(P^{T}\circ \hat{P}^{T})( \mathrm{dx}, \mathrm{d}y) 
$$
can be approximated by
$$
  \int_{\mathbb{R}^{n}
    \times \mathbb{R}^{n}} d(x,y) \hat{\pi}^{2}(P^{T}\circ \hat{P}^{T})(
  \mathrm{dx}, \mathrm{d}y)  \,,
$$
as this only induces a higher order error $O(\mathrm{d}^{2}_{w}(\pi,
\hat{\pi}))$. Then the integral above can be estimated by sampling
$\mathrm{d}(X_{T}, \hat{X}_{T})$ such that $X_{0} = \hat{X}_{0} \sim
\pi$. The distance $d(X_{T}, \hat{X}_{T})$ can be obtained by extrapolating
$d(\hat{X}_{T}, \hat{X}^{2h}_{T})$, where $\hat{X}^{2h}_{T}$ is the
random process of the same numerical scheme with the same noise term
but $2h$ time step size. This gives Algorithm \ref{finitetime}.

\begin{algorithm}[h]
\caption{Estimate finite time error}
\label{finitetime}
\begin{algorithmic}
\State {\bf Input:} Initial value $\mathbf{x}_{1}$
\State {\bf Output:} An estimator of $\mathrm{d}_{w}(\pi P^{T}, \pi\hat{P}^{T}) $ 
\For {i = 1 to N}
\State Using the same noise, simulate $\hat{X}_{t}$ and
  $\hat{X}^{2h}_{t}$ with initial value $\mathbf{x}_{i}$ up to $t = T$
\State Let $y_{i} = c d(\hat{X}_{T}, \hat{X}^{2h}_{T})$, where
  $c$ is a constant depending on the order of accuracy
\State Let $\mathbf{x}_{i+1} = \hat{X}_{T}$
\EndFor
\State Return $
  \frac{1}{N}\sum_{i = 1}^{N} y_{i}$
\end{algorithmic}
\end{algorithm}

When $N$ is sufficiently large, $\mathbf{x}_{1},\cdots,
\mathbf{x}_{N}$ in Algorithm \ref{finitetime} are from a long trajectory of the time-$T$ skeleton of
$\hat{X}_{T}$. Hence $\mathbf{x}_{1},\cdots,
\mathbf{x}_{N}$ are approximately sampled from $\hat{\pi}$. The error
term $y_{i} = c d(\hat{X}_{T}, \hat{X}^{2h}_{T})$ for $\hat{X}^{h}_{0} =
\hat{X}^{2h}_{0} = \mathbf{x}_{i}$ estimates $d(X_{T}, \hat{X}_{T})$. Therefore,
$$
  \frac{1}{N}\sum_{i = 1}^{N}y_{i}
$$
estimates the integral 
$$
  \int_{\mathbb{R}^{n}
    \times \mathbb{R}^{n}} d(x,y) \hat{\pi}^{2}(P^{T}\circ
  \hat{P}^{T})( \mathrm{dx}, \mathrm{d}y)  \,,
$$
which is an upper bound of $\mathrm{d}_{w}(\pi P^{T}, \pi\hat{P}^{T})$. The constant $c$ in Algorithm \ref{finitetime} depends on the order of
accuracy. For example, if the numerical scheme is a strong
approximation with order $1$ error $O(h)$,
then $c = 1$ because $d(X_{T}, \hat{X}_{T}) \approx d(\hat{X}_{T},
\hat{X}^{2h}_{T})$ when $h \ll 1$. 

One advantage of Algorithm \ref{finitetime} is that it can run
together with the Monte Carlo sampler. The trajectory of
$\hat{X}^{2h}_{t}$ can not be recycled. But the trajectory of
$\hat{X}_{t}$ can be used to estimate either the invariant density or
the expectation of an observable. 

{\bf Coupling for contraction rate.} The idea of estimating 
$\alpha_{\Omega}$ is to use coupling. We can construct a Markov
process $\hat{Z}_{t} = (\hat{X}^{(1)}_{t}, \hat{X}^{(2)}_{t})$ such
that $\hat{Z}_{t}$ is a Markov coupling of $\hat{X}^{(1)}_{t}$ and
$\hat{X}^{(2)}_{t}$. Then as introduced in Section 2, the first passage time to the ``diagonal'' hyperplane $\{(\mathbf{x}, \mathbf{y}) \in \mathbb{R}^{2n} \,|\, \mathbf{y} =
  \mathbf{x} \} $ is the {\bf coupling time}, which is denoted
  by $\tau_{c}$. It then follows from Lemma \ref{thm:wasserstein_inequality} that 
$$
  \mathrm{d}_{w}(\delta_{x} \hat{P}^{T}, \delta_{y} \hat{P}^{T}) \leq
  \mathbb{P}_{x,y}[\tau_{c} > T] \,.
$$ 

Then we can use extreme value theory to estimate
$\alpha_{\Omega}$. The idea is to uniformly sample initial values $(x,y)$
from $\Omega \times \Omega$, and define $\beta(x,y) :=
\mathbb{P}_{x,y}[\tau_{c}  > T]/d(x,y)$. Then $\beta$ is actually a random variable whose sample
can be easily computed. We use extreme value theory to estimate an
upper bound for $\beta$, and denote it by $\alpha_{\Omega}$. See
Algorithm \ref{coupling} for details. The threshold $V$ in
Algorithm \ref{coupling} is usually
chosen such that approximately $5 \%$ samples are greater than this
threshold. 

\begin{algorithm}[h]
\caption{Estimate contraction rate}
\label{coupling}
\begin{algorithmic}
\State {\bf Input:} A compact set $\Omega$
\State {\bf Output:} An estimator of the minimal contraction rate $\alpha_{\Omega}$.
\For {i = 1 to N}
\State Sample pairs $(x_{i}, y_{i})$ uniformly from $\Omega \times
\Omega$
\State Set $K_{i} = 0$, $r_{i} = 0$
\For {j = 1 to M}
\State Run $\hat{Z}_{t}$ with initial value $(x_{i}, y_{i})$ until
$\min\{\tau_{c}, T\}$
\If {$\tau_{c} \leq T$}
\State $K_{i} \gets K_{i} + 1$
\EndIf
\EndFor
\State $r_{i} \gets K_{i}/(d(x_i,y_i)M)$
\EndFor
\If {$\max\{ r_{i} \} \geq 1$}
\State The estimator fails. Choose better coupling algorithm or larger
$T$
\Else
\State Let $v_{i} = 1/(1 - r_{i})$ for all $1 \leq i \leq N$
\State Choose a threshold $V$
\State Use generalized Pareto distribution to fit $\{ v_{i} - V \,|\,
v_{i} \geq V \}$ to get two parameters $(\zeta, \xi)$.
\If {$\xi \geq 0$}
\State The estimator fails. Choose better coupling algorithm or larger $T$
\Else
\State Let $v_{max} = V - \zeta/\xi$.
\EndIf
\EndIf
\State Return $1 - 1/v_{max}$ as the estimator of $\alpha_{\Omega}$.
\end{algorithmic}
\end{algorithm}

If running successfully, Algorithms \ref{finitetime} and
\ref{coupling} give us an upper bound of $\mathrm{d}_{w}(\pi,
\hat{\pi})$ according to equation \eqref{estimator}, which can be used
to check the quality of samples.

\medskip

{\bf Construction of $\hat{Z}_{t}$. } It remains to construct a
coupling scheme that is suitable for the numerical trajectory $\hat{
X}_{t}$. In this paper we use the follow two types of couplings. 

Denote two margins of $\hat{Z}_{t}$ by $\hat{X}^{(1)}_{t}$ and
$\hat{X}^{(2)}_{t}$ respectively. The {\it Reflection coupling} means
two noise terms of $\hat{X}^{(1)}_{t}$ and
$\hat{X}^{(2)}_{t}$ in an update are always symmetric. For the case
of Euler-Maruyama scheme and constant coefficient matrix 
$\sigma(X_{t}) = \sigma$ as an example. A reflection coupling gives the following update
from $t$ to $t+h$:
\begin{align}
  \label{reflection}
&\hat{X}^{(1)}_{t+h} = \hat{X}^{(1)}_{t} + f( \hat{X}^{(1)}_{t})
  \mathrm{d}t + \sigma \sqrt{h} N_{t} \\\nonumber
&\hat{X}^{(2)}_{t+h} = \hat{X}^{(2)}_{t} + f( \hat{X}^{(2)}_{t})
  \mathrm{d}t + \sigma \sqrt{h} (I - 2 e_{t}e_{t}^{T})N_{t} \,, 
\end{align}
where $N_{t}$ is a normal random variable with mean $0$ and variance
$h$, and
$$
  e_{t} = \frac{1}{\| \sigma^{-1}(\hat{X}^{(1)}_{t}  - \hat{X}^{(2)}_{t} )\|}\sigma^{-1}(\hat{X}^{(1)}_{t}  - \hat{X}^{(2)}_{t} )
$$
is a unit vector. It is known that reflection coupling is the
optimal coupling for Brownian motion in $\mathbb{R}^{n}$
\cite{lindvall1986coupling, hsu2013maximal}. Empirically it 
gives fast coupling rates for many stochastic
differential equations with non-degenerate noise. 

The {\it maximal coupling} looks for the maximal coupling probability
for the next step (or next several steps) of the numerical
scheme. Assume $\hat{X}^{(1)}_{t}$ and $\hat{X}^{(2)}_{t}$ are both
known. Then it is easy to explicitly calculate the probability density
function of $\hat{X}^{(1)}_{t+h}$ and $\hat{X}^{(2)}_{t+h}$, denoted
by $p^{(1)}(x)$ and $p^{(2)}(x)$ respectively. The update of
$\hat{X}^{(1)}_{t+h}$ and $\hat{X}^{(2)}_{t+h}$ is described in
Algorithm \ref{max}. This update maximizes the probability of coupling
at the next step. This is similar to the maximal coupling method
implemented in \cite{jacob2017unbiased, johnson1998coupling}. 

\begin{algorithm}[h]
\caption{Maximal coupling}
\label{max}
\begin{algorithmic}
\State {\bf Input:} $\hat{X}^{(1)}_{t}$ and $\hat{X}^{(2)}_{t}$
\State {\bf Output:} $\hat{X}^{(1)}_{t+h}$, $\hat{X}^{(2)}_{t+h}$, and
$\tau$ if coupled.
\State Sample $\hat{X}^{(1)}_{t+h}$ and $\hat{X}^{(2)}_{t+h}$
according to the numerical scheme using independent noise
\State Compute probability density functions $p^{(1)}(x)$ and
$p^{(2)}(x)$.
\State Let
$$
  r = \frac{\min\{ p^{(1)}(\hat{X}^{(1)}_{t+h}),
    p^{(2)}(\hat{X}^{(1)}_{t+h})\}}{\max\{
    p^{(1)}(\hat{X}^{(1)}_{t+h}), p^{(2)}(\hat{X}^{(1)}_{t+h})\}}
  \times \frac{\min\{ p^{(1)}(\hat{X}^{(2)}_{t+h}),
    p^{(2)}(\hat{X}^{(2)}_{t+h})\}}{\max\{
    p^{(1)}(\hat{X}^{(2)}_{t+h}), p^{(2)}(\hat{X}^{(2)}_{t+h})\}}
$$
\State Draw $u$ from uniform 0-1 distribution.
\If {u < r}
\State $\hat{X}^{(1)}_{t+h} = \hat{X}^{(2)}_{t+h}$, $\tau = t+h$
\Else
\State Use  $\hat{X}^{(1)}_{t+h}$ and $\hat{X}^{(2)}_{t+h}$ sampled
before
\State $\tau$ is undetermined. 
\EndIf
\end{algorithmic}
\end{algorithm}

\medskip

In practice, we use reflection coupling when $\hat{X}^{(1)}_{t}$ and
$\hat{X}^{(2)}_{t}$ are far away from each other,
and maximal coupling when they are sufficiently close. This method
fits discrete-time numerical schemes, as using reflection coupling
only will easily let two processes miss each other. In our simulation
code, the threshold of changing coupling method is $2\sqrt{h} \|
\sigma\|$. When the distance between $\hat{X}^{(1)}_{t}$ and
$\hat{X}^{(2)}_{t}$ is smaller than this, we use maximal coupling. 

\subsection{A fast estimator}
In practice, Algorithm \ref{finitetime} can be done together with the
Monte Carlo sampler to compute either an observable or
the invariant probability density function. The extra cost comes from
simulating trajectories of $\hat{X}^{2h}_{t}$, which takes $50 \%$
time of running trajectories of $\hat{X}_{t}$. The main overhead of
the above mentioned methods is algorithm \ref{coupling}. Because we
want a quantitative upper bound of $\mathrm{d}_{w}(\pi, \hat{\pi})$,
the contraction rate of $\hat{P}^{T}$ in 1-Wasserstein space with
respect to all pairs of initial points in $\Omega \times \Omega$ must
be estimated. In practice, this takes a long time because one needs
to run many ($100 - 1000$) independent trajectories from each initial
point to estimate the coupling probability. 

In practice, if one only needs a rough estimate about the sample
quality instead of a definite upper bound of the 1-Wasserstein
distance, Algorithm \ref{coupling} can be done in a much easier
way by estimating the exponential {\it rate} of convergence of
$\hat{P}^{t}$. It is usually safe to assume that the rate of
exponential contraction is same for all ``reasonable'' initial
distributions. In addition, the contraction rate is bounded from below
by the exponential tail of the coupling time distribution
$\mathbb{P}[\tau_{c} > t]$. Therefore, we only need to sample some initial points
uniformly distributed in $\Omega \times \Omega$ and estimate the exponential tail of the
coupling time distribution. This gives an estimate
$$
  \lim_{t \rightarrow \infty} \frac{1}{t}\log (\mathbb{P}_{u}[\tau_{c} >
t])  = - \gamma \,,
$$
where $u$ denotes the uniform distribution on $\Omega \times \Omega$,
and $\gamma$ can be obtained by a linear fit of $\mathbb{P}_{u}[\tau_{c}
> t]$ versus $t$ in a log-linear plot. Then we have a rough estimate of
$\mathrm{d}_{w}(\pi, \hat{\pi})$ given by 
\begin{equation}
  \label{rough}
\mathrm{d}_{w}(\pi, \hat{\pi}) \approx \frac{ \mathrm{d}_{w}(\pi
  P^{T}, \pi \hat{P}^{T}) + 2 \epsilon}{1 - e^{-\gamma T}} \,,
\end{equation}
where $\mathrm{d}_{w}(\pi P^{T}, \pi \hat{P}^{T})$ is estimated by
Algorithm \ref{finitetime}. 

Equation \eqref{rough} usually differs from the output of Algorithm
\ref{finitetime}, Algorithm \ref{coupling}, and equation
\eqref{estimator} by an unknown multiplicative constant. However, in practice, this is
usually sufficient for us to predict the quality of Monte Carlo
sampler with relatively low computational cost. In numerical examples we
will show that, it is usually sufficient to estimate the exponential
tail of $\mathbb{P}_{u}[\tau_{c} > t]$ by running $10^{4} - 10^{5}$
examples.

\section{Numerical examples}

\subsection{Ring density}
The first example is the ``ring density'' example that has a known
invariant probability measure. Consider the following stochastic differential equation:
\begin{equation}\label{ring}
\left\{
\begin{array}{l}
dx=\big(-4x(x^2 + y^2 - 1) + y\big)\,dt + \sigma\,dW_t^1\\
dy=\big(-4y(x^2 + y^2 - 1) - x\big)\,dt + \sigma\,dW_t^2
\end{array}
\right.,
\end{equation}
where $W_t^1$ and $W_t^2$ are independent Wiener processes, and
$\sigma$ is the strength of the noise. The drift part
of equation \eqref{ring} is a gradient flow of potential function
$V(x,y)=(x^2+y^2-1)^2$ plus an orthogonal rotation term that does not change the invariant probability density
function. Hence the invariant probability measure
of \eqref{ring} has a probability density function 
$$u(x,y)=\frac{1}{K}e^{-2V(x,y)/\sigma^2},$$
where $K=\pi\int_{-1}^{\infty}e^{-2t^2/\sigma^2}\,dt$ is a
normalizer. We will compare the invariant probability measure of the
Euler-Maruyama scheme and that of equation \eqref{ring}.

In our simulation, we choose $\sigma = 0.5$ and $T = 10$. The first simulation runs
$8$ independent long trajectories up to time $1.25 \times 10^{6}$. Hence
Algorithm \ref{finitetime} compares the distance between $\hat{X}_{T}$
and $\hat{X}_{T}^{2h}$ for $10^{7}$ samples. Constant $c$ equals $1$ here
because the Euler-Maruyama scheme is a strong approximation with
accuracy $O(h)$ when $\sigma$ is a constant. Algorithm
\ref{finitetime} gives an upper bound
$$
  \mathrm{d}_{w}( \pi P^{T}, \pi \hat{P}^{T}) \leq 0.00141635 \,.
$$
In addition, all eight trajectories are contained in the box $[-2,
2]^{2}$. Hence we choose $\Omega = [-2, 2]^{2}$ and $\epsilon =
10^{-7}$.

\medskip

Then we run Algorithm \ref{coupling} to get coupling probabilities up
to $T = 10$. The number of initial values $(x_{i}, y_{i})$ is
$20000$. Then we run $1000$
pairs of trajectories from each initial point to estimate the coupling
probability. The probability that coupling has not happened before $T$
is then divided by $d(x_{i}, y_{i})$, which estimates an upper bound
of the contraction rate
$$
 r_{i} = \frac{ \mathbb{P}_{x_{i}, y_{i}}[ \tau_{c} > T]}{ d(x_{i},
    y_{i})} \geq \frac{\mathrm{d}_{w}( \delta_{x_{i}}\hat{P}^{T},
    \delta_{y_{i}}\hat{P}^{T})}{d(x_{i}, y_{i})} \,.
$$
Then we use generalized Pareto distribution (GPD) to fit $\{1/(1 -
r_{i})\}_{i = 1}^{20000}$. The threshold $V$ is chosen to be
$1.48$. The fitting algorithm gives parameters $\xi = -0.1822$ and
$\zeta = 0.0326$. This gives $\alpha_{\Omega} = 0.3972$. A comparison
of cumulative distribution functions of empirical data and that of the
GPD fitting is demonstrated in Figure \ref{GPD}.

\begin{figure}[htbp]
\centerline{\includegraphics[width = 0.7\linewidth]{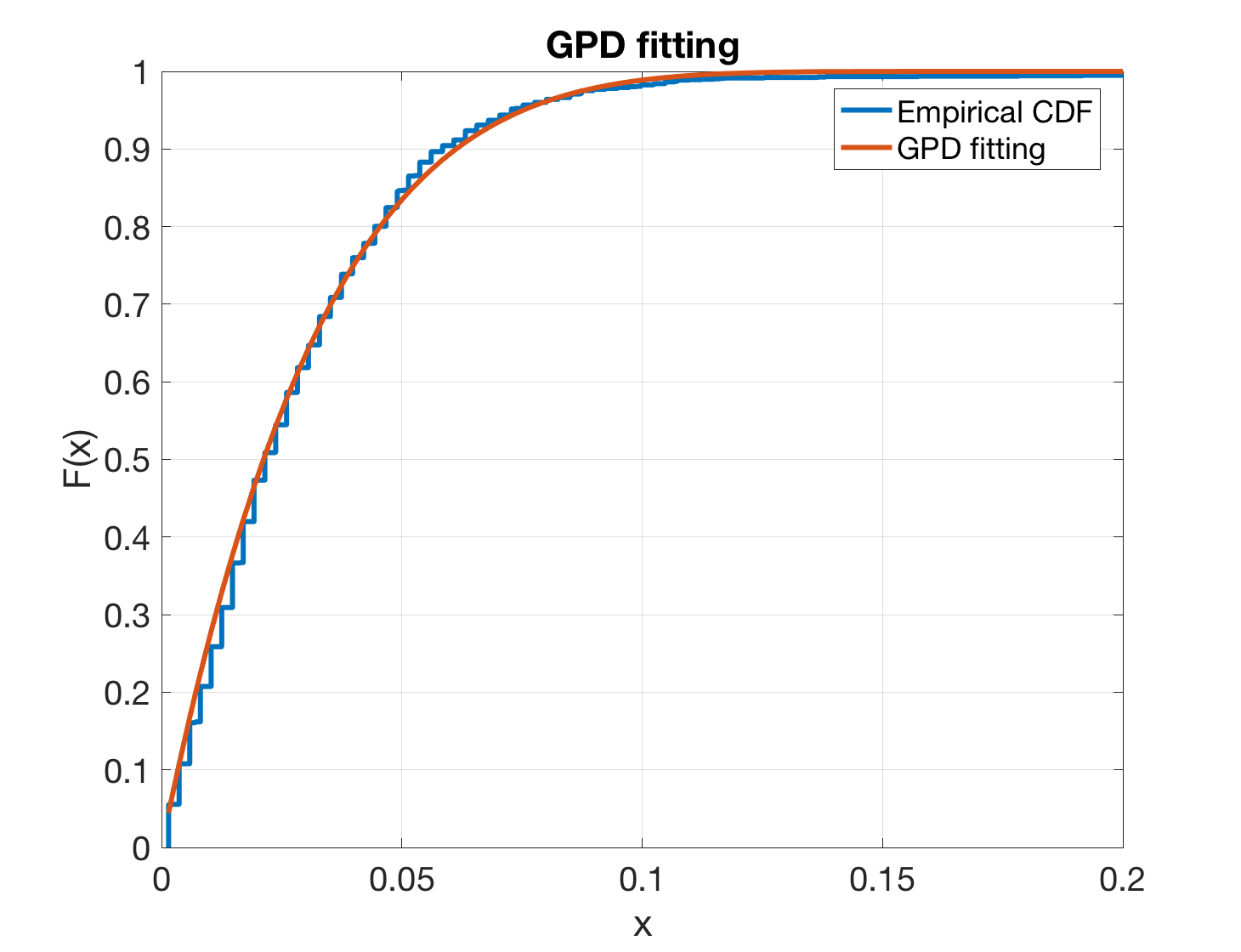}}
\caption{A comparison of cumulative distribution functions of
  empirical data $\{ v_{i} \,|\, v_{i} > V \}$ and that of the
  generalized Pareto distribution.}
\label{GPD}
\end{figure}

\medskip

Combining all estimates above, we obtain a bound
\begin{equation}
\label{ringbound}
  \mathrm{d}_{w}(\pi, \hat{\pi}) \leq 0.002350 \,.
\end{equation}

Since the invariant probability measure of equation \ref{ring} is
known, we can check the sharpness of the bound given in equation \eqref{ringbound}. The
approach we take is Monte Carlo simulation with extrapolation to
infinite sample size. On a $256\times 256$ grid, we use $8$ long
trajectories to estimate the invariant probability density function of
\ref{ring}. The sample sizes of these trajectories are $8\times
10^{8}, 1.6 \times 10^{9}, \cdots, 6.4 \times 10^{9}$. Then we compute
the total variation distance between 
$$u(x,y)=\frac{1}{K}e^{-2V(x,y)/\sigma^2}$$
and the empirical probability density function at those grid
points. The error is linearly dependent on the $-1/2$ power of the sample
size. Linear extrapolation shows that the total variation distance at
the infinite sample limit is $\approx 0.001534$. The linear extrapolation is
demonstrated in Figure~\ref{Extra}. Since $\mathrm{d}_{w}$ is smaller
than the total variation distance, the 1-Wasserstein distance $\mathrm{d}_{w}(\pi,
\hat{\pi})$ should be no greater than $0.001534$. Therefore, our
estimation given in equation \eqref{ringbound} is larger than the true
distance between $\pi$ and $\hat{\pi}$, but is reasonably sharp. 

\begin{figure}[htbp]
\centerline{\includegraphics[width = 0.7\linewidth]{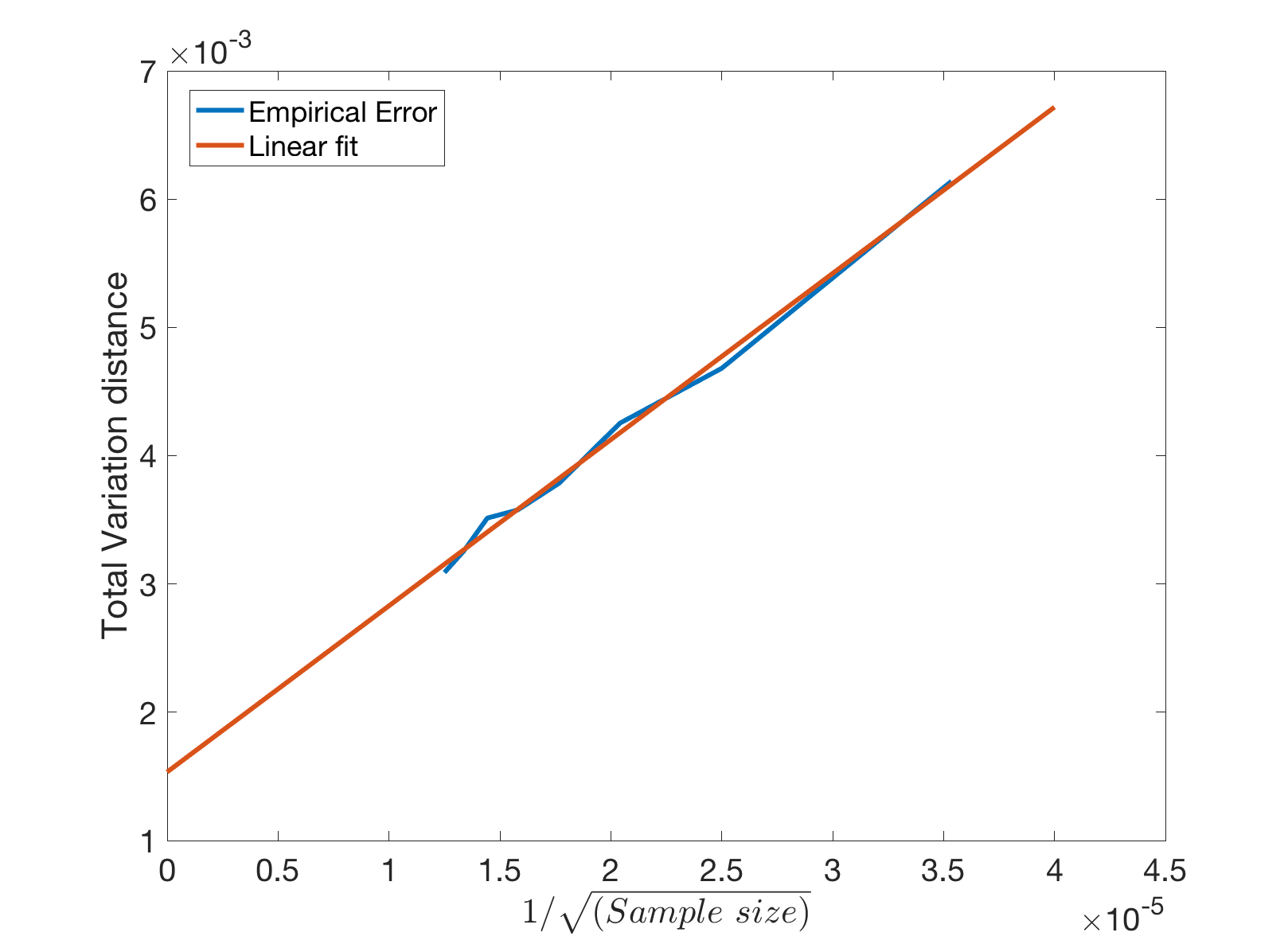}}
\caption{Linear extrapolation for the total variance distance $\|\pi -
  \hat{\pi} \|_{TV}$ at the infinite sample limit.}
\label{Extra} 
\end{figure}

It remains to comment on the fast estimator mentioned in Section
3.3. In Figure \ref{ringtail} we draw the exponential tail of
$\mathbb{P}[\tau_{c} > t]$ and its linear fit. The slope of the
exponential tail is $\gamma = -0.1378$. When $T = 10$, we have
$e^{-\gamma T} = 0.2521$. Equation \eqref{rough} then gives an
estimate
$$
  \mathrm{d}_{w}(\pi, \hat{\pi}) \approx 0.00189377 \,,
$$
which is actually closer to the total variation distance that we have
measured numerically. 

\begin{figure}[htbp]
\centerline{\includegraphics[width = 0.7\linewidth]{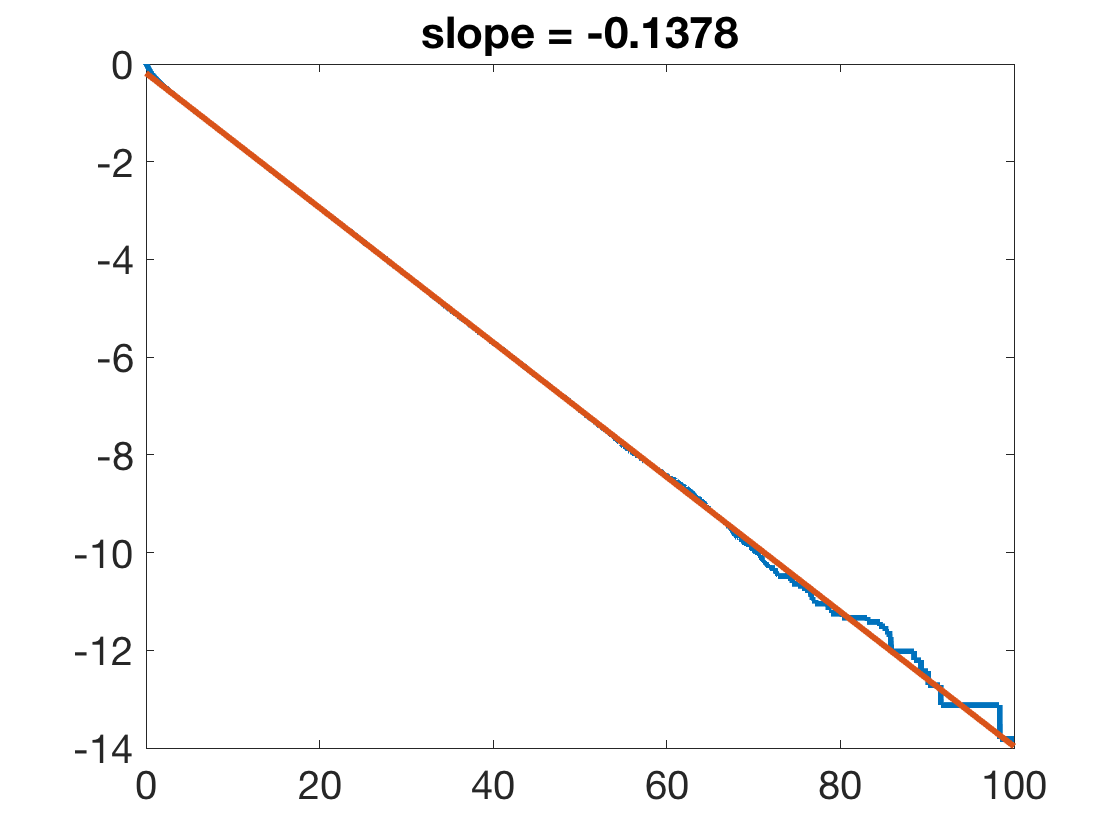}}
\caption{Exponential tail of $\mathbb{P}[\tau_{c} > t]$ versus $t$
  when initial values are uniformly sampled in $\Omega \times \Omega$.}
\label{ringtail} 
\end{figure}

\subsection{Double well potential}
The second example we study is a gradient flow with respect to an
asymmetric double well potential. Let
$$
  V(x) = \left \{
\begin{array}{ll}
6 x^{2} - 60&\mbox{ if } x \geq 4\\
\frac{1}{4}x^{4} - 2x^{2} + 4 &\mbox{ if } 0 \leq x < 4\\
\frac{1}{4}r^{4}x^{4} - 2r^{2}x^{2} + 4 &\mbox{ if } -4/r \leq x < 0\\
6r^{2}x^{2} - 60 & \mbox{ if } x < - 4/r 
\end{array}
\right .
$$
If $r \neq 1$, $V$ is an asymmetric double well potential
function. Note that we make $V(x)$ a quadratic function when $x \geq
4$ or $x < -4/r$, because the original quartic function has very large
derivatives when $|x|$ is large, which has some undesired numerical
artifacts. 

Now consider the gradient flow of $V(x)$ with additive random perturbation
$$
  X_{t} = - V'(x) + \sigma\mathrm{d}W_{t} \,.
$$
It is easy to see that $X_{t}$ admits an invariant probability measure $\pi$
with probability density function
$$
  u(x) = \frac{1}{K}e^{2V(x)/\sigma^{2}} \,,
$$
where $K$ is a normalizer. In this example, we choose $\Omega = [-2,
4]$, as $u(x)$ is extremely small when $x < -2$ or $x > 4$.  

Because of the double well potential, trajectories from two local
minima need a long time to meet with one another. Hence the speed of
convergence of the law of $X_{t}$ to $\pi$ is slow. Much longer times
are needed so that trajectories can couple in Algorithm
\ref{coupling}. In addition, we changed the underlying distance from
$|x-y|$ to $|x-y|^{0.45}$, because if two initial values are very close
to each other, with some small probability one trajectory can run into a
different local minimum and takes a very long time to return. As a result,
for reasonably large $T$, if the underlying distance
$|x-y|$ is used, $\hat{P}^{T}$ does not contract in 1-Wasserstein metric space when two
initial points are very close to each other.

Model parameters are chosen to be $r = 5$, $\sigma =
1.2$, and $T = 50$. The numerical trajectory $\hat{X}_{t}$ is
obtained by running Euler-Maruyama scheme with $h = 0.0025$. We first run Algorithm 1 with $8$ independent long
trajectories to compare the distance between $\hat{X}_{T}$
and $\hat{X}_{2h}$. The length of each trajectory is $5 \times
10^{6}$. The constant $c$ is still equal to $1$ because the accuracy of
Euler-Maruyama scheme is $O(h)$ when $\sigma$ is a constant. Algorithm
1 gives an upper bound
$$
  \mathrm{d}_{w}(\pi P^{T}, \pi \hat{P}^{T}) \leq 0.167345 \,.
$$
The upper bound is quite large due to large second order derivatives
of $V(x)$ and large time span $T$.

Then we run Algorithm 2 to get the contraction rate of
$\hat{P}^{T}$ for $T = 50$. The number of initial values $(x_{i}, y_{i})$ is
$20000$. We run $1000$ pairs of trajectories from each initial points
to get $\mathbb{P}[\tau_{c} > T]$. This gives $20000$ numbers
$$
  r_{i} = \frac{ \mathbb{P}_{x_{i}, y_{i}}[ \tau_{c} > T]}{ d(x_{i},
    y_{i})} \geq \frac{\mathrm{d}_{w}( \delta_{x_{i}}\hat{P}^{T},
    \delta_{y_{i}}\hat{P}^{T})}{d(x_{i}, y_{i})} \,.
$$
Then we use generalized Pareto distribution (GPD) to fit $\{1/(1 -
r_{i})\}_{i = 1}^{20000}$. Similar to in the previous example, GPD
parameter fitting
gives $\alpha_{\Omega} = 0.2019$. See Figure \ref{EVT1D} Left for the
fitting result. In addition, because this is an 1D
problem, we can plot the contraction rate $r_{i}$ for each pair of
$(x_{i}, y_{i})$ on a grid that covers $\Omega \times \Omega$. See
Figure \ref{EVT1D} Right for a heat map of contraction rates from each pairs
of initial points. From Figure \ref{EVT1D} Right we can see that the
high value of $\mathbb{P}_{x_{i}, y_{i}}[ \tau_{c} > T]$ is reached
when one of the pairs $(x,y)$ falls into the left part of the domain, where
$V(x)$ has large derivatives. In addition, when $(x, y)$ becomes
even closer to line $\{x - y = 0\}$, $\mathbb{P}_{x_{i}, y_{i}}[ \tau_{c} > T]$
drops dramatically to $0$. This further confirms that $\hat{P}^{T}$ is
contracting in 1-Wasserstein metric space for $T = 50$. Finally,
we provide an exponential tail of $\mathbb{P}[ \tau_{c} > t]$ for
$\hat{X}_{t}$, demonstrated in Figure \ref{dbwelltail}. The exponential tail is
$\gamma = -0.09078$. Hence $e^{-\gamma T}$ gives $0.1068$, which is
smaller than $\alpha_{\Omega}$ obtained above. 

\begin{figure}[htbp]
\centerline{\includegraphics[width = \linewidth]{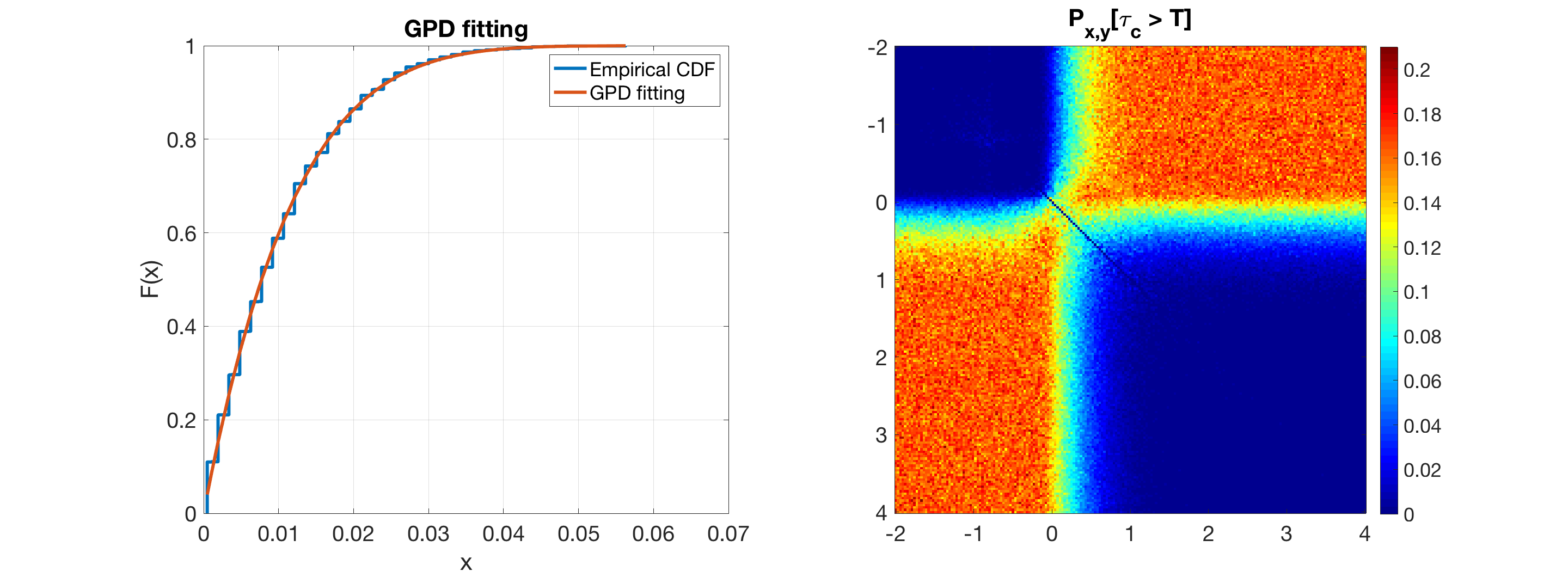}}
\caption{ Left: Fitting generalized Paredo distribution with $v_{i} =
  1/(1 - r_{i})$. The fitting result is compared with the empirical
  cumulative distribution function. Right: Heat map of contraction
  rate $r_{i}$ for initial pairs of points on a grid that covers
  $\Omega \times \Omega$. }
\label{EVT1D}
\end{figure}

\begin{figure}[htbp]
\centerline{\includegraphics[width = 0.7\linewidth]{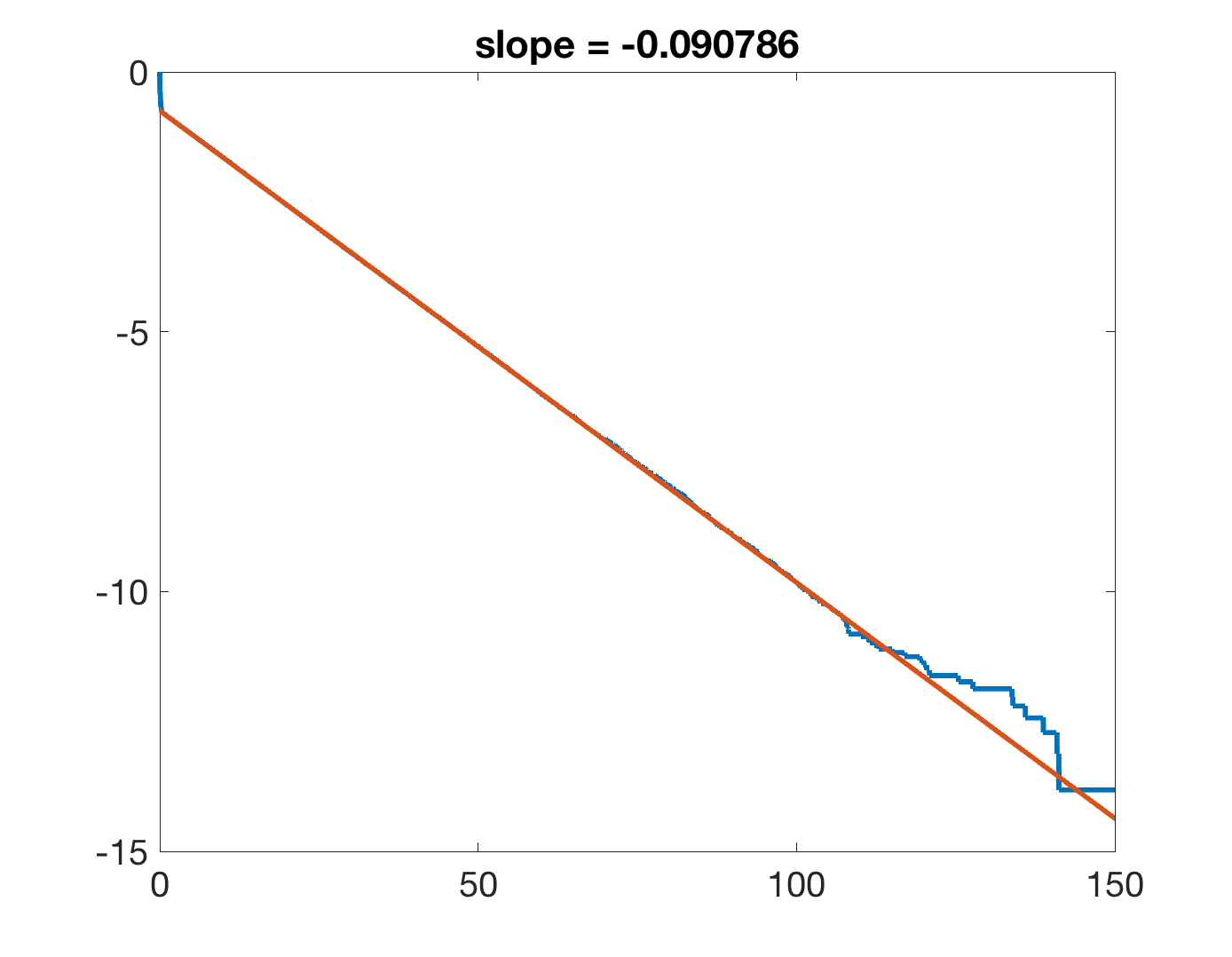}}
\caption{Exponential tail of $\mathbb{P}[\tau_{c} > t]$ versus $t$
  when initial values are uniformly sampled in $\Omega \times \Omega$.}
\label{dbwelltail} 
\end{figure}

Combining all estimates above, we have an upper bound
\begin{equation}
  \label{1Dbound}
\mathrm{d}_{w}(\pi, \hat{\pi}) \leq 0.2097 \,.
\end{equation}
If equation \eqref{rough} is used instead, we have a rough estimate
$$
  \mathrm{d}_{w}(\pi, \hat{\pi}) \approx 0.187354 \,.
$$

Both results imply that two invariant probability measures may be very
different from each other. This can be confirmed by using Monte Carlo
simulation to compute the invariant probability measure of
$\hat{X}_{t}$. We run $8$ independent long trajectories of
$\hat{X}_{t}$ up to $5 \times 10^{6}$ to compute its invariant
probability density function. The result is compared with $u(x)$, the
invariant probability density function of $X_{t}$, in Figure
\ref{compare1D}. We can see visible difference between these two
probability density functions. The total variation difference between them is
$0.05906$. This is smaller than the bound predicted by equation
\eqref{1Dbound}, partially because we have to use the distance induced
by $|x-y|^{0.45}$ to make $r_{i}$ uniformly bounded from above. But our
calculation still predicts an unusually large difference between two invariant
probability measures.

\begin{figure}[htbp]
\centerline{\includegraphics[width= \linewidth]{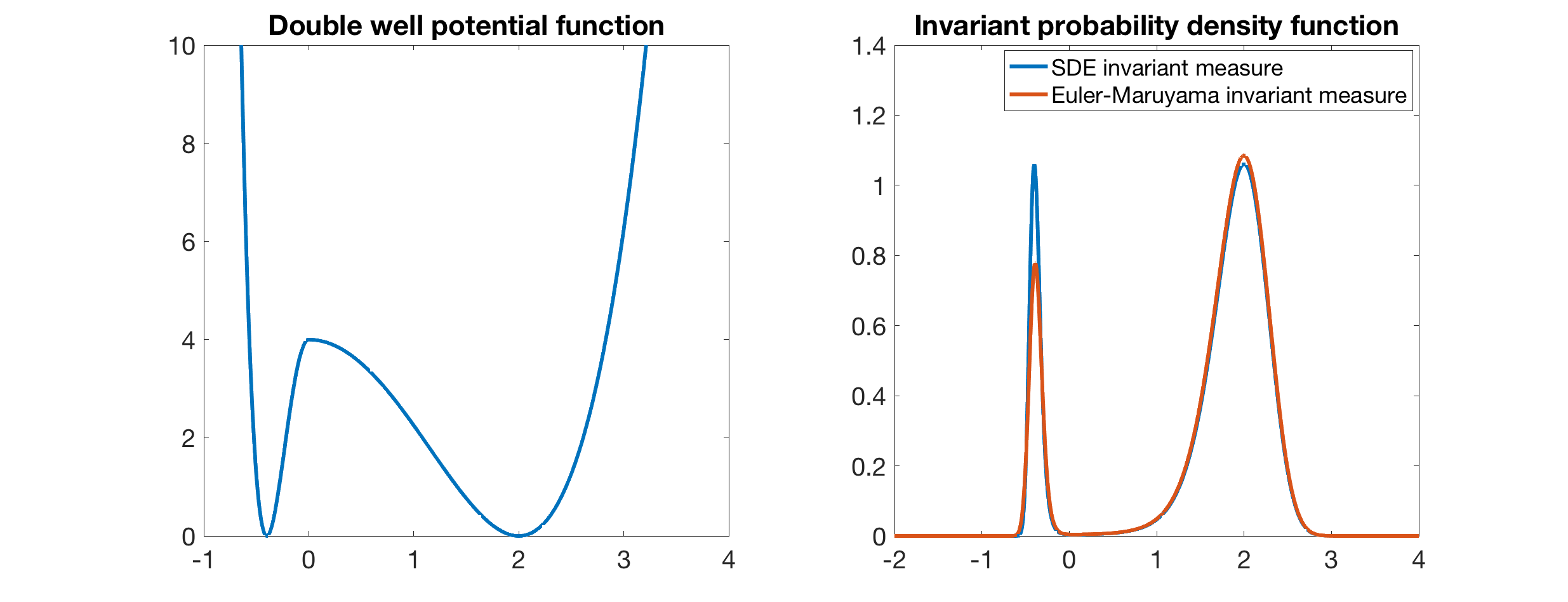}}
\caption{Left: potential function $V(x)$. Right: Comparison of
  invariant probability density functions of $X_{t}$ and $\hat{X}_{t}$.}
\label{compare1D}
\end{figure}

We still owe readers a heuristic explanation of the phenomenon seen in
Figure \ref{compare1D} Right. The probability density of the invariant
probability measure of $\hat{X}_{t}$ is much lower than that of
$X_{t}$ around the local minimum $x = -0.4$ because the potential
function is asymmetric. As a result, when a trajectory of
$\hat{X}_{t}$ moves from $x = -0.4$ to $x = 0$, the Euler-Maruyama
scheme tend to underestimate $-V'(x)$, which increases dramatically
near $x = -0.4$. The effect of such underestimation is much weaker
near the other local minimum $x = 2$, where the value of $|V'(x)|$ is
significantly smaller. As a result, it is easier for the trajectory of
$\hat{X}_{t}$ to pass the separatrix $x = 0$ from left to right than
from right to left. This causes the unbalanced invariant probability
density function as seen in Figure \ref{compare1D} Right.

\subsection{Degenerate diffusion}
\label{sec:langevin_coupling}

Langevin dynamics have noise terms that only appear directly in the velocity
equation and not on the position, leading to a Fokker-Planck equation with
a degenerate, hypoelliptic diffusion.  We consider a potential energy similar
to the ring density equation, with SDE:
\begin{equation}\label{lang_ring}
\begin{split}
d X &= V \, dt \\
d V &= - \nabla U(X) \, dt- \gamma V \, dt + \sigma \, dW \\
\end{split}
\end{equation}
where
\begin{equation}
U(X) = ( X_1^2 + X_2^2 - 1)^2.
\end{equation}
One trajectory of equation \eqref{lang_ring} is demonstrated in Figure
\ref{Langevin} Top Left. The invariant measure satisfies $\rho(X,V) \propto \exp(-\beta (V^2 + U(X)))$ where
$\beta = \frac{2 \gamma}{\sigma^2}.$

Because of the degenerate noise term, we use a modified coupling algorithm 
involving three components which are based on the coupling in~\cite{eberle2019couplings}.  
We consider two realizations $(X^{(1)}, V^{(1)})$
and $(X^{(2)}, V^{(2)})$ of the SDE and note that the difference
process is contractive on the hyperplane 
$Q = X^{(1)}-X^{(2)} + \gamma^{-1} (V^{(1)}-V^{(2)}) = 0$~\cite{eberle2019couplings}.  
We employ reflection coupling when $\|Q\| > 0.08,$ where the reflection tensor is given by $I - 2 * Q Q^T / Q^2.$  When $\|Q\| < 0.08,$
we use synchronized coupling, where both processes use the same realization
of the Brownian noise. The threshold of switching coupling method
(which is $0.08$ in our computation) should be $O(\sqrt{h})$, which is
the distance that $(\hat{X}_{t}, \hat{V}_{t})$ jumps after one step. When the processes are sufficiently close, we
attempt to couple using maximal coupling using two steps of the numerical 
integrator.  Two steps
of the Euler-Maruyama integrator with stepsize $h$ gives 
\begin{equation}
\label{EM_twosteps}
\begin{split}
X_{t+2h} &=    X_t + 2 h V_t - h^2 \nabla U(X_t) - \gamma h V_t + \sigma h^{3/2} N_0, \\
V_{t+2h} &= (1- \gamma h)^2 V_t - (1- \gamma h ) h \nabla U(X_t) - h \nabla U(X_t + h V_t) \\
&\qquad + \sigma (1 - \gamma h) h^{1/2} N_0 + \sigma h^{1/2} N_1,
\end{split}
\end{equation}
where $N_0$ and $N_1$ represent two i.i.d. normal variables.  We sample 
from the above to 
compute the probability of the processes to couple after two steps.
To improve the computational efficiency of the scheme we only test for coupling 
when the processes are close, specifically, when $|X^{(1)} -
X^{(2)}| < 2.5 \sigma h^{3/2}$ and $|V^{(1)} - V^{(2)}| < 2.5 \sigma
h^{1/2}.$

%
%
\begin{figure}[h]
\centerline{\includegraphics[width = 1.1\linewidth]{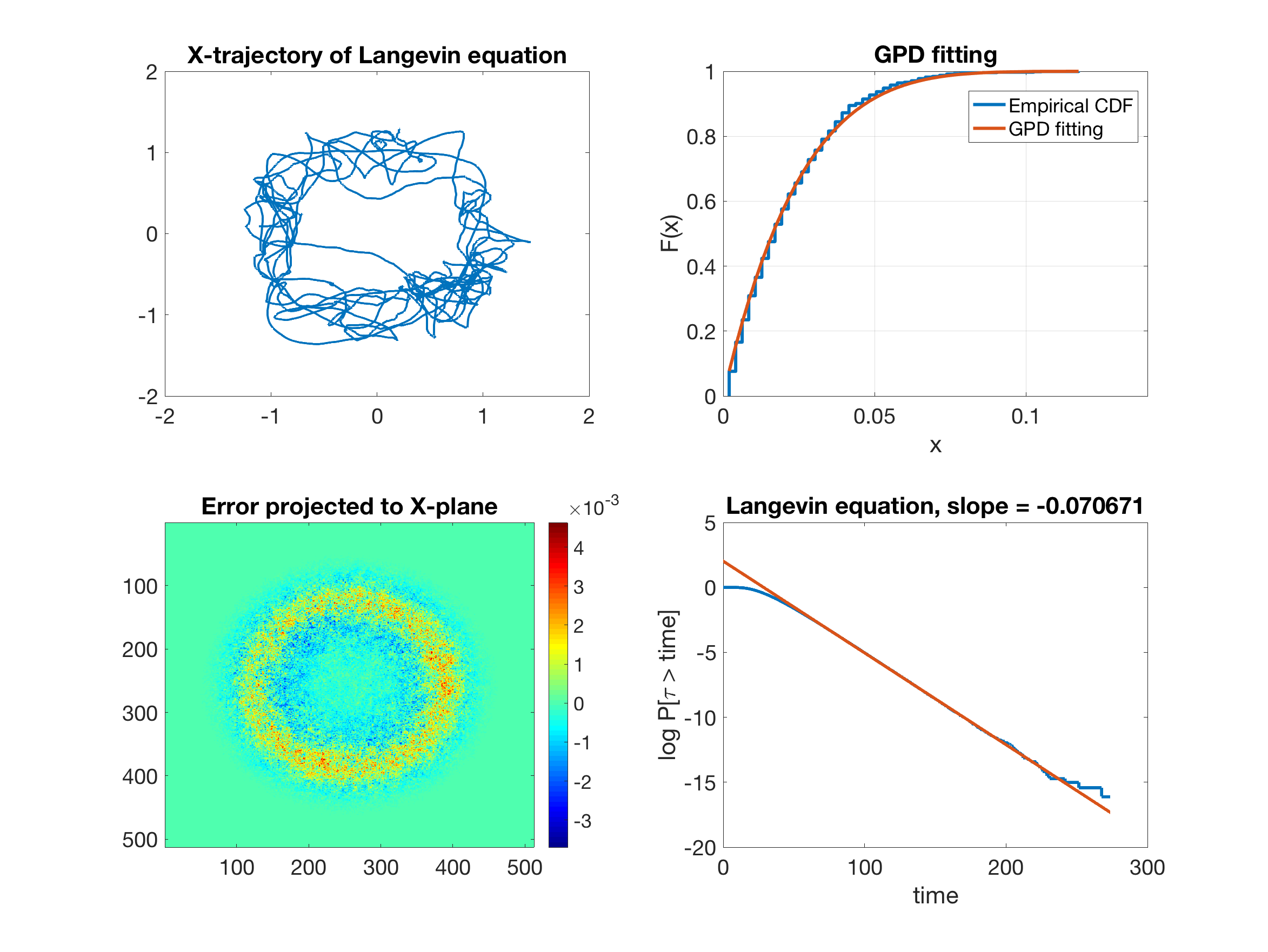}}
\caption{Top left: A sample path of Langevin equation with length
  $100$ (projected to
  X-plane). Top right: Fitting generalized Paredo distribution with $v_{i} =
  1/(1 - r_{i})$. The fitting result is compared with the empirical
  cumulative distribution function. Bottom left: Difference between
  $\hat{\pi}$ and $\pi$ projected to X-plane. Bottom Right: Exponential tail of $\mathbb{P}[\tau_{c} > t]$ versus $t$
  when initial values are uniformly sampled in $\Omega \times \Omega$.
}
\label{Langevin} 
\end{figure}

In this simulation, we choose $\sigma = 0.5$ and truncation time $T =
40.$ The time step size is $h = 0.001$.
Averaged from $8$ long trajectories with length $4\times
10^{6},$ we find the error $d_w(\pi P^T, \pi \hat{P}^T) \leq 0.0111313.$ 
We choose the domain $\Omega = [-3 \,, \,3]^{2} \times [-6 \,, 6]^{2}$ for
the coordinates $(x_{1}, x_{2}, v_{1}, v_{2})$. 
GPD fitting used
$20,000$ pairs of initial values and $1,000$ trajectories for each pair of
initial value giving $\alpha_{\Omega} = 0.3727.$ See Figure
\ref{Langevin} Top Right for a comparison of cumulative distribution
function of the GPD fitting. This gives an upper bound
$$
  \mathrm{d}_{w}(\pi, \hat{\pi}) \leq 0.017745 \,.
$$

It is not easy to numerically estimate the
distance between $\pi$ and $\hat{\pi}$ as they are probability
measures in $\mathbb{R}^{4}$. Instead, we project them to the $X$-plane
and compare the projected probability density functions. Note that the
difference between two projected probability density functions is smaller than that of
$\pi$ and $\hat{\pi}$. In Figure \ref{Langevin} Bottom Left, we can
see the difference between $P_{x}\hat{\pi}$ and $P_{x}\pi$, where $P_{x}$ is the projection 
operator to the $X$-plane. The approximate numerical invariant measure $\hat{\pi}$ in Figure~\ref{Langevin}
is obtained from $80$ long trajectories, each of which is
integrated up to $T = 10^{7}$. The probability density function of
$\hat{\pi}$ is computed on a $512 \times 512$ grid.

Finally, we compute the exponential tail of the coupling time to
obtain the slope $\gamma = -0.07067$. Therefore, when $T = 40$, equation
\eqref{rough} gives a rougher estimate 
$$
  \mathrm{d}_{w}(\pi, \hat{\pi}) \approx 0.011832 \,.
$$

\subsection{Lorenz 96 model}
In this subsection we study a highly chaotic example. Consider equation
\begin{align}
  \label{Lorenz}
\mathrm{d}X^{1}_{t} & = (X^{2}_{t} - X^{D-1}_{t})X^{D}_{t} - X^{1}_{t} + F +
  \sigma \mathrm{d}W^{1}_{t} \\\nonumber
\mathrm{d}X^{2}_{t} & = (X^{3}_{t} - X^{D}_{t})X^{1}_{t} - X^{2}_{t} + F +
  \sigma \mathrm{d}W^{2}_{t} \\\nonumber
&\vdots \\\nonumber
\mathrm{d}X^{i}_{t} & = (X^{i+1}_{t} - X^{i-2}_{t})X^{i-1}_{t} - X^{i}_{t} + F +
  \sigma \mathrm{d}W^{i}_{t} , \quad i = 3, \cdots, N-1\\\nonumber
&\vdots \\\nonumber
\mathrm{d}X^{D}_{t} & = (X^{1}_{t} - X^{D-2}_{t})X^{D-1}_{t} - X^{D}_{t} + F +
  \sigma \mathrm{d}W^{D}_{t}  \,,
\end{align}
where the forcing term $F$ is usually chosen to be $8$. When $D =
4$, the system has a large periodic orbits. It demonstrates chaotic
dynamics when $D \geq 5$ \cite{karimi2010extensive}.  See Figure~\ref{EVTLorenz}~Left 
for the trajectory of the first three variables as an
example.

In our simulations, we use Euler-Mayurama scheme with step size
$h = 0.0001$ to simulate numerical trajectories $\hat{X}_{t}$. Model
parameters are $\sigma = 3$ and $F = 8$. The time
span is chosen to be $T = 3$. When $D = 4$, in Algorithm \ref{finitetime}, we run
$8$ long trajectories with length $3\times 10^{5}$ each to compare
the difference between $\hat{X}_{T}$ and $\hat{X}^{2h}_{T}$. The simulation gives an upper bound
$$
  \mathrm{d}_{w}(\pi P^{T}, \pi \hat{P}^{T}) \leq 0.144864 \,.
$$
Although we have chosen a small time step size $h$, this upper bound is
still relatively large. The error gets even larger when $D = 5$ is
used, because the deterministic dynamics is intensively chaotic. The output of
Algorithm \ref{finitetime} is $0.11946$ for $h = 0.00001$ and
$0.431059$ for $h = 0.0001$. 

Then we run Algorithm \ref{coupling} for $D = 4$ to get the contraction rate of
$\hat{P}^{T}$ for $T = 3$. $\Omega$ is chosen to be the 4D-box $[-16,
\, 19]^{4}$ because when running Algorithm \ref{finitetime}, no
trajectory has ever been outside of this box. The number of initial values $(x_{i}, y_{i})$ is
$20000$. We run $1000$ pairs of trajectories from each initial points
to get $\mathbb{P}[\tau_{c} > T]$. This gives 
$$
  r_{i} = \frac{ \mathbb{P}_{x_{i}, y_{i}}[ \tau_{c} > T]}{ d(x_{i},
    y_{i})} \geq \frac{\mathrm{d}_{w}( \delta_{x_{i}}\hat{P}^{T},
    \delta_{y_{i}}\hat{P}^{T})}{d(x_{i}, y_{i})} 
$$
for $i = 1, \cdots, 20000$. 
The GPD fitting gives $\alpha_{\Omega} = 0.7081$. See Figure \ref{EVTLorenz} Right for the
fitting result. Combine the output of two algorithms, we have the
bound
$$
  \mathrm{d}_{w}(\pi, \hat{\pi}) \leq 0.4963 \,.
$$

\begin{figure}[htbp]
\centerline{\includegraphics[width = \linewidth]{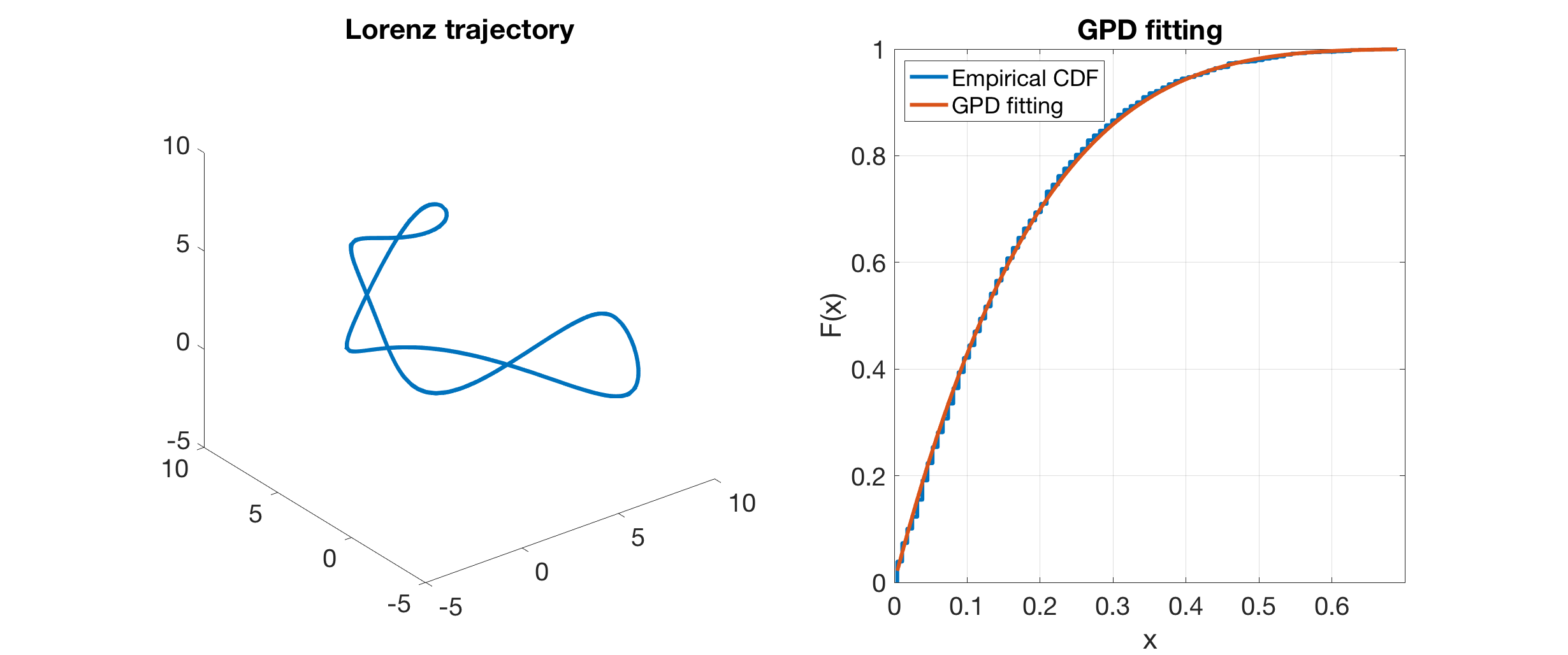}}
\caption{Left: A plot of the first 3 variables of the limit
  cycle. Right: GPD fitting of $\{1/(1 -
r_{i})\}_{i = 1}^{20000}$ and a comparison with the empirical
cumulative distribution function. }
\label{EVTLorenz}
\end{figure}

When $D = 5$ and $h = 1 \times 10^{-5}$, the computational cost of Algorithm
\ref{coupling} becomes very high due to extremely small time step
size. Instead, we compute exponential tails of the coupling time for
$D = 4$ and $D = 5$ with $h = 0.0001$. The result is demonstrated in
Figure \ref{Lorenztail}. We can see that when $D = 5$,
we have an exponential tail $\gamma = 0.12541$. Therefore, if $T = 3$
is unchanged, we have $(1 - e^{-\gamma T})^{-1} = 3.1892$. We conclude
that when $D = 5$, the 1-Wasserstein distance between 
$\pi$ and $\hat{\pi}$ is unacceptably large even if $h = 1 \times
10^{-5}$. This is mainly caused by very large finite time error. In
order to approximate $\pi$ effectively, high order approximation of
equation \eqref{Lorenz} is necessary. 

\begin{figure}[htbp]
\centerline{\includegraphics[width = \linewidth]{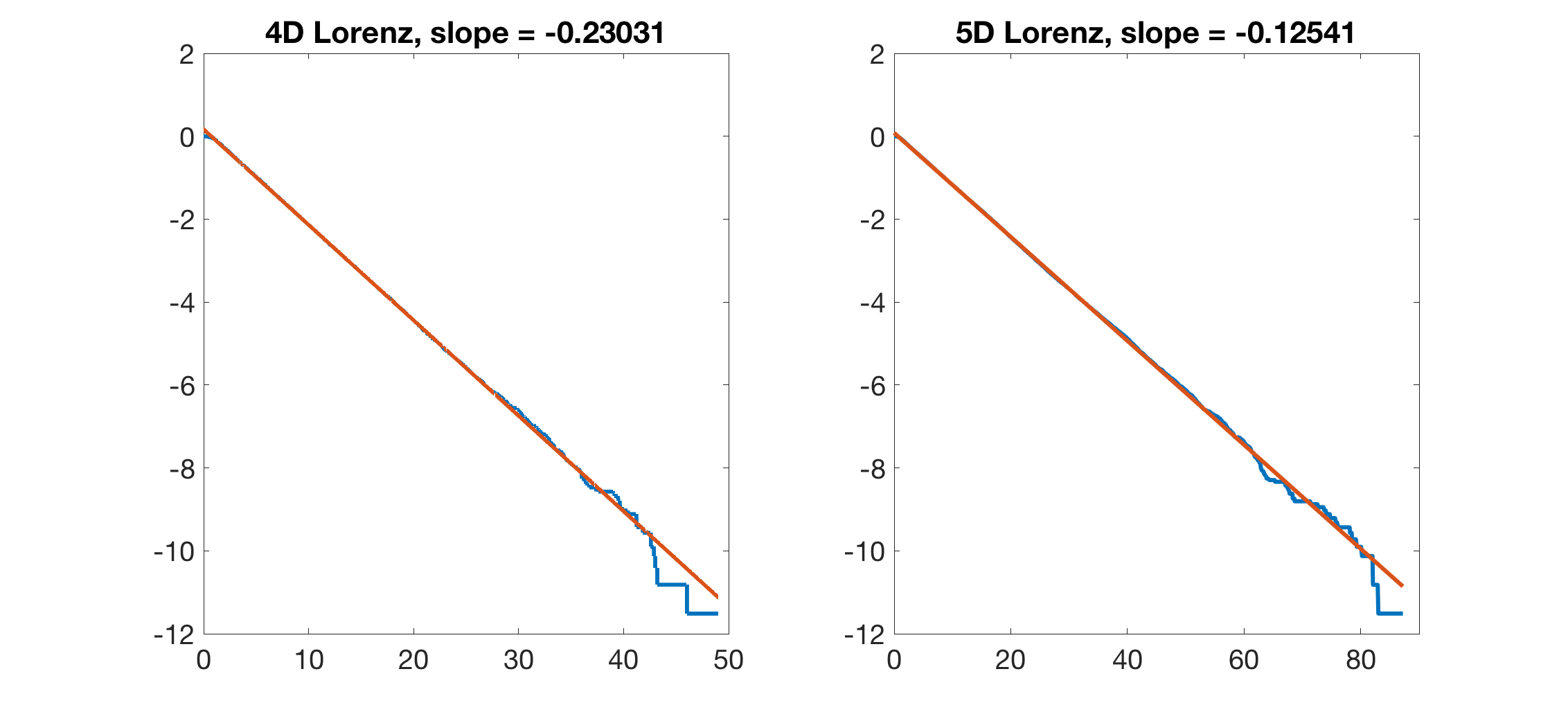}}
\caption{Exponential tail of $\mathbb{P}[\tau_{c} > t]$ versus $t$
  when initial values are uniformly sampled in $\Omega \times
  \Omega$. Left: 4D Lorenz 96 system. Right: 5D Lorenz 96 system.}
\label{Lorenztail}
\end{figure}

\subsection{Stochastically coupled FitzHugh-Nagumo oscillator with mean-field
  interaction}
	\label{sec:fitzhugh}
We consider here a high dimensional example, a
stochastically coupled FitzHugh-Nagumo(FHN) oscillator. FitzHugh-Nagumo
model is a nonlinear model that models the periodic change of membrane
potential of a spiking neuron under external stimulation. The model is
a 2D system
\begin{eqnarray}
\label{FHN1}
 \mu \mathrm{d}u& = &(u - \frac{1}{3}u^{3} - v) \mathrm{d}t +
                           \sqrt{\mu} \sigma \mathrm{d}W_{t}
  \\\nonumber
\mathrm{d} v &=& (u + a) \mathrm{d}t + \sigma \mathrm{d}W_{t} \,,
\end{eqnarray}
where $u$ is the membrane potential, and $v$ is a recovery
variable. 

When $a = 1.05$, the deterministic system admits a stable fixed
point with a small basin of attraction \cite{chen2019spatial}. A suitable random perturbation can drive this system away
from the basin of attraction and trigger limit cycles intermittently.

In this section we consider $N$ coupled equations \eqref{FHN1} with
both nearest-neighbor interaction and mean-field interaction. Let $v =
\sqrt{\mu}v$ be the new recovery variable. We have
\begin{align}
\label{FHNc}
 \mathrm{d}u_{i}& = \left(\frac{1}{\mu}u - \frac{1}{3\mu}u^{3} -
                  \frac{1}{\sqrt{\mu}}v + \frac{d_{u}}{\mu} ( u_{i+1}
                      + u_{i-1} - 2 u_{i}) + \frac{w}{\mu}( \bar{u} - u_{i})\right ) \mathrm{d}t +
                           \frac{\sigma}{\sqrt{\mu}} \mathrm{d}W_{t}
  \\\nonumber
\mathrm{d} v_{i} &=(\frac{1}{\sqrt{\mu}}u + \frac{a}{\sqrt{\mu}}) \mathrm{d}t + \frac{\sigma}{\sqrt{\mu}}\mathrm{d}W_{t} \,,
\end{align}
for $i = 1,\cdots, N$, where $d_{u}$ is the neareast-neighbor coupling
strength, $w$ is the mean field coupling strength, and
$$
  \bar{u} = \frac{1}{N} \sum_{i = 1}^{N} u_{i} 
$$
is the mean membrane potential. In our simulations, we let $u_{-1} =
u_{N}$ and $u_{N+1} = u_{1}$. In other words, $N$ neurons are
connected as a ring. 

The parameters we choose are $d_{u} = 0.03$ and $w = 0.3$. In addition
we have $\sigma = 0.6$. Activities
of neurons are weakly coupled under this parameter set. See Figure
\ref{FHNdynamics} for the dynamics of this system. In particular, from
Figure \ref{FHNdynamics} Right, we can see that nearest neighbor
neurons tend to spike together. However, the global dynamics is only
weakly synchronized. The
dimensions of system in our study are chosen to be $N = 2$ and $N = 40$,
corresponding to stochastic differential equations in $\mathbb{R}^{4}$
and $\mathbb{R}^{80}$. The numerical scheme in our simulation is
Euler-Maruyama scheme with $h = 0.0005$. The finite time span is $T =
3$ for both cases.

\begin{figure}[htbp]
\centerline{\includegraphics[width = \linewidth]{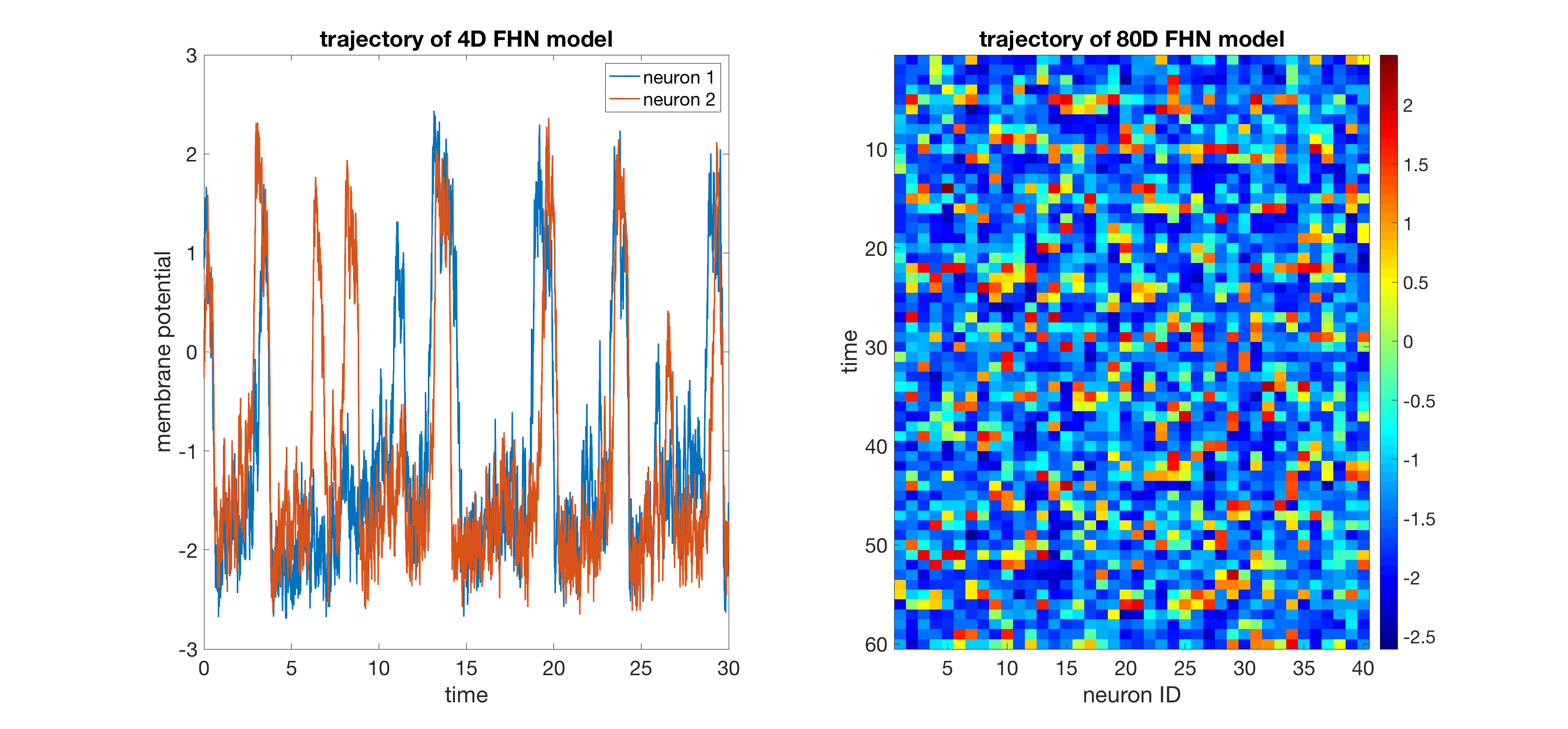}}
\caption{Dynamics of the FitzHugh-Nagumo model. Left: Trajectory of
  membrane potential of two neurons evolving along equation
  \eqref{FHNc} with $N = 2$ and other parameters specified in this paper. Right: Snapshots
  of membrane potential of $40$ neurons evolving along equation
  \eqref{FHNc} with $N = 40$ and other parameters specified in this
  paper.}
\label{FHNdynamics}
\end{figure}

We first run Algorithm \ref{finitetime} and Algorithm \ref{coupling}
for equation \eqref{FHNc} with $N = 2$. In Algorithm \ref{finitetime},
we run $8$ long trajectories up to $3 \times 10^{5}$. The simulation
gives an upper bound
$$
  \mathrm{d}_{w}(\pi P^{T} ,\pi \hat{P}^{T}) \leq 0.0105652
$$
for $T = 3$. Then we run Algorithm \ref{coupling} for $\Omega = [-6, \, 6]^{2N}$ to
get the contraction rate of $\hat{P}^{T}$. The number of initial
values is $40000$. This gives $40000$ coupling probabilities $r_{1},
\cdots, r_{40000}$. Fitting these numbers with generalized Pareto distribution gives
$\alpha_{\Omega} = 0.5197$. See the result in Figure Figure \ref{FHN2}
Left. Combine the output of two algorithms, we
have
$$
  \mathrm{d}_{w}(\pi, \hat{\pi}) \leq 0.0220 \,.
$$
Hence the invariant probability measure simulated by running the
Euler-Maruyama scheme is trustworthy in spite of the presence of
slow-fast dynamics. In addition, we compute the tail of coupling time
for $N = 2$, which is demonstrated in Figure \ref{FHN2} middle. The
exponential tail has a slope $\gamma  = 0.50741$. Therefore, equation
\eqref{rough} gives an estimate 
$$
  \mathrm{d}_{w}(\pi, \hat{\pi}) \approx 0.01351 \,.
$$

When $N = 40$, we still run Algorithm \ref{finitetime} with $8$ long
trajectories up to time $3\times 10^{5}$. This gives us an estimate
$$
  \mathrm{d}_{w}(\pi P^{T} ,\pi \hat{P}^{T}) \leq 0.0443737
$$
for $T = 3$. However, Algorithm \ref{coupling} becomes expensive for
$N = 40$. Instead we compute the exponential of coupling time to get a
rough estimate. The exponential tail of coupling time is demonstrated
in Figure \ref{FHN2} Right. We have an exponential tail with slope
$\gamma = 0.31612$. Therefore, equation \eqref{rough} gives a rough
estimate
$$
  \mathrm{d}_{w}(\pi, \hat{\pi}) \approx 0.07243 \,.
$$
Therefore, we conclude that $\hat{\pi}$ is an acceptable approximation
of $\pi$ when $N = 40$.

\begin{figure}[htbp]
\centerline{\includegraphics[width = 1.25\linewidth]{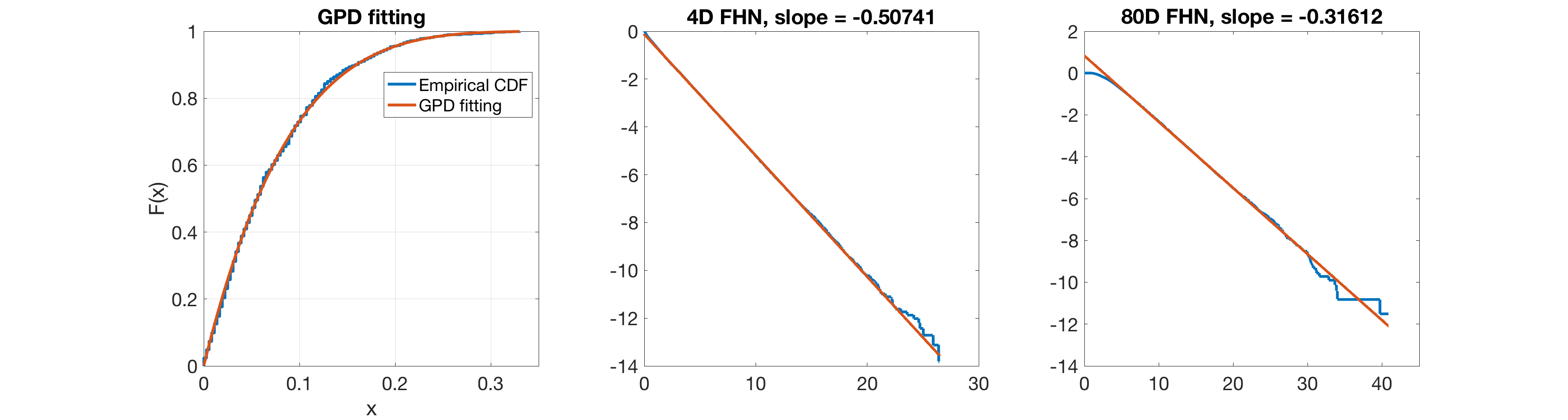}}
\caption{Left: GPD fitting of $\{1/(1 -
r_{i})\}_{i = 1}^{40000}$ and a comparison with the empirical
cumulative distribution function. Middle: Exponential tail of
$\mathbb{P}[\tau_{c} > t]$ versus $t$ for equation \eqref{FHNc} with $N = 2$. Right: Exponential tail of
$\mathbb{P}[\tau_{c} > t]$ versus $t$ for equation \eqref{FHNc} with $N = 40$. }
\label{FHN2}
\end{figure}
\section{Conclusion}
In this paper we provide a coupling-based approach to quantitatively
estimate the distance between the invariant probability measure $\pi$
of a stochastic differential equation (SDE) and that of its numerical
scheme, denoted by $\hat{\pi}$. The key idea is that the distance
$d(\pi, \hat{\pi})$ can be bounded by $\epsilon(1 - \alpha)^{2}$,
where $\epsilon$ is the finite time truncation error
over the time interval $[0, T]$, and $\alpha$
is the rate of contraction of $\hat{P}^{T}$, the time-$T$ transition kernel of
the numerical scheme for the SDE. The finite time truncation error
comes from extrapolation analysis, and we use coupling method to
estimate $\alpha$. Neither of these two estimates relies on spatial
discretization.  Hence our approach is relatively dimension
free. Depending on the practical requirement, we provide one algorithm for
computing a quantitative upper bound of $d(\pi, \hat{\pi})$, and an
efficient algorithm for a ``rough estimate'' of $d(\pi, 
\hat{\pi})$. The performance of these two algorithms are tested with
several numerical examples. Our approach can be extended to
other stochastic processes, such as stochastic differential equations with random
switching and applications related to Hamiltonian Monte Carlo
\cite{bou2018coupling, mao2006stochastic}. Also, our method is not
limited to numerical analysis. In fact, $\hat{\pi}$ can be
the invariant probability measure of any small perturbations of the
original SDE. In this case, our method gives the sensitivity of $\pi$
against small perturbations.

This paper mainly estimates 1-Wasserstein distance between $\pi$ and
$\hat{\pi}$. However, the coupling method can also be used to estimate
the other type of distances, such as the total variation distance. In
addition, in many cases, we are actually more interested in the
error of the expectation of a certain observable when
integrating with respect to $\hat{\pi}$ versus $\pi$. We choose
1-Wasserstein distance mainly because it is more convenient to
estimate the finite time truncation error in 1-Wasserstein
distance.  In fact, there is only a small literature about
estimating finite time truncation error in the total variation norm. 

The difficulty of estimating finite time truncation error in total
variation distance is partially solved if the grid-based SDE solver
introduced in \cite{bou2018continuous} is used. It is much easier to count samples
on grids than in continuous state space. In fact, we find that this grid-based SDE
solver is more compatible with both Fokker-Planck solver in
\cite{li2018data,dobson2019efficient} and the sample quality checking algorithm studied in this paper. In
the future, we will write a separate paper to discuss the application
this sample quality checking algorithm to this grid-based SDE solver.


\bibliography{myref}
\bibliographystyle{amsplain}
\end{document}